\newtheorem{theorem}{Theorem}[section]
\newtheorem{problem}[theorem]{Problem}
\newtheorem{lemma}[theorem]{Lemma}
\newtheorem{corollary}[theorem]{Corollary}
\newtheorem{definition}[theorem]{Definition}
\newtheorem{proposition}[theorem]{Proposition}
\newtheorem{remark}[theorem]{Remark}
\newtheorem{open}[theorem]{Problem}
\newcommand{\fqn}{\mathbb{F}_{q^n}}
\newcommand{\cL}{{\mathcal L}}
\newcommand{\F}{{\mathbb F}}
\newcommand{\fq}{{\mathbb F}_{q}}
\newcommand{\la}{\langle}
\newcommand{\ra}{\rangle}
\newcommand{\PG}{\mathrm{PG}}
\newcommand{\N}{\mathrm{N}}
\renewcommand{\tocsection}[3]{%
  \indentlabel{\@ifnotempty{#2}{\bfseries\ignorespaces#1 #2\quad}}\bfseries#3}
\renewcommand{\tocsubsection}[3]{%
  \indentlabel{\@ifnotempty{#2}{\ignorespaces#1 #2\quad}}#3}
\newcommand\@dotsep{4.5}
\def\@tocline#1#2#3#4#5#6#7{\relax
  \ifnum #1>\c@tocdepth % then omit
  \else
    \par \addpenalty\@secpenalty\addvspace{#2}%
    \begingroup \hyphenpenalty\@M
    \@ifempty{#4}{%
      \@tempdima\csname r@tocindent\number#1\endcsname\relax
    }{%
      \@tempdima#4\relax
    }%
    \parindent\z@ \leftskip#3\relax \advance\leftskip\@tempdima\relax
    \rightskip\@pnumwidth plus1em \parfillskip-\@pnumwidth
    #5\leavevmode\hskip-\@tempdima{#6}\nobreak
    \leaders\hbox{$\m@th\mkern \@dotsep mu\hbox{.}\mkern \@dotsep mu$}\hfill
    \nobreak
    \hbox to\@pnumwidth{\@tocpagenum{\ifnum#1=1\bfseries\fi#7}}\par% <-- \bfseries for \section page
    \nobreak
    \endgroup
  \fi}
\renewcommand\csname r@tocindent0\endcsname{0pt}
\def\l@subsection{\@tocline{2}{0pt}{2.5pc}{5pc}{}}
\title{Multi-orbit cyclic subspace codes and linear sets}
\date{}
\author[Ferdinando Zullo]{Ferdinando Zullo}
\address{Ferdinando Zullo, \textnormal{Dipartimento di Matematica e Fisica, Universit\`a degli Studi della Campania ``Luigi Vanvitelli'', Viale Lincoln, 5, I--\,81100 Caserta, Italy}}
\email{ferdinando.zullo@unicampania.it}
\subjclass[2020]{11T99; 11T06; 11T71; 94B05} 
\keywords{Cyclic subspace code; Sidon space; linear set; projection map;  linearized polynomial}
\begin{document}

\maketitle

\begin{center}
    \emph{To the beloved memory of my grandmother Elena.}
\end{center}

\begin{abstract}
Cyclic subspace codes gained a lot of attention especially because they may be used in random network coding for correction of errors and erasures. 
Roth, Raviv and Tamo in 2018 established a connection between cyclic subspace codes (with certain parameters) and Sidon spaces. These latter objects were introduced by Bachoc, Serra and Zémor in 2017 in relation with the linear analogue of Vosper's Theorem.
This connection allowed Roth, Raviv and Tamo to construct large classes of cyclic subspace codes with one or more orbits.
In this paper we will investigate cyclic subspace codes associated to a set of Sidon spaces, that is cyclic subspace codes with more than one orbit. Moreover, we will also use the geometry of linear sets to provide some bounds on the parameters of a cyclic subspace code.
Conversely, cyclic subspace codes are used to construct families of linear sets which extend a class of linear sets recently introduced by Napolitano, Santonastaso, Polverino and the author. This yields large classes of linear sets with a special pattern of intersection with the hyperplanes, defining rank metric and Hamming metric codes with only three distinct weights.
\end{abstract}

\section{Introduction}

Let $k$ be a non-negative integer with $k \leq n$, the set of all $k$-dimensional $\F_q$-subspaces of $\F_{q^n}$, viewed as an $\F_{q}$-vector space, forms a \textbf{Grassmannian space} over $\F_q$, which is denoted by $\mathcal{G}_{q}(n,k)$. A \textbf{constant dimension subspace code} is a subset $C$ of $\mathcal{G}_{q}(n,k)$ endowed with the metric defined as follows \[d(U,V)=\dim_{\F_q}(U)+\dim_{\F_q}(V)-2\dim_{\F_q}(U \cap V),\]
where $U,V \in \mathcal{G}_{q}(n,k)$. This metric is also known as \textbf{subspace metric}.
Subspace codes have been recently used for the error correction in random
network coding, see \cite{KoetterK}. 
The first class of subspace codes studied was the one introduced in \cite{Etzion}, which is known as \textbf{cyclic subspace codes}.
A subspace code $C \subseteq \mathcal{G}_q(n,k)$ is said to be \textbf{cyclic} if for every $\alpha \in \F_{q^n}^*$ and every $V \in C$ then $\alpha V \in C$. 

Let $V \in \mathcal{G}_q(n,k)$, the \textbf{orbit} of $V$ is the set $C_V=\{\alpha V: \alpha \in \F_{q^n}^*\}$, and its size is $(q^n-1)/(q^t-1)$, for some $t$ which divides $n$. More precisely,

\begin{theorem}\cite[Theorem 1]{Otal}\label{th:orbitsize}
Let $U$ be a $k$-dimensional $\fq$-subspace of $\fqn$. Then $\F_{q^d}$ is the largest field such that $U$ is also an $\F_{q^d}$-subspace if and only if the orbit size of $U$ is $\frac{q^n-1}{q^d-1}$.
\end{theorem}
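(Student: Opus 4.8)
The plan is to connect the orbit size of $U$ to the stabilizer of $U$ under the multiplicative action of $\fqn^*$ on the $\fq$-subspaces of $\fqn$. First I would observe that $\mathrm{Orb}(U)=\{\alpha U : \alpha\in\fqn^*\}$ is a single orbit of the group $\fqn^*$ acting by scalar multiplication, so by the orbit–stabilizer theorem $|\mathrm{Orb}(U)| = (q^n-1)/|\mathrm{Stab}(U)|$, where $\mathrm{Stab}(U)=\{\alpha\in\fqn^*: \alpha U = U\}$. The key algebraic point is then that $S := \mathrm{Stab}(U)\cup\{0\}$ is a subfield of $\fqn$: it is clearly closed under multiplication and contains $0$ and $1$, and for closure under addition one notes that if $\alpha,\beta\in S$ then $(\alpha+\beta)U \subseteq \alpha U + \beta U = U$, and since $\dim_{\fq}((\alpha+\beta)U)=\dim_{\fq}(U)$ whenever $\alpha+\beta\ne 0$, equality $(\alpha+\beta)U=U$ follows; inverses are handled similarly since $\alpha U = U$ implies $U = \alpha^{-1}U$. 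Hence $S=\F_{q^d}$ for some $d\mid n$, and $|\mathrm{Stab}(U)|=q^d-1$, giving $|\mathrm{Orb}(U)|=(q^n-1)/(q^d-1)$.

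Next I would identify this subfield $\F_{q^d}$ with the largest field of scalars of $U$. If $U$ is an $\F_{q^e}$-subspace, then for every $\lambda\in\F_{q^e}^*$ we have $\lambda U = U$, so $\F_{q^e}^*\subseteq \mathrm{Stab}(U)$, i.e.\ $\F_{q^e}\subseteq \F_{q^d}$ and $e\mid d$. Conversely, $U$ is closed under multiplication by every element of $\mathrm{Stab}(U)\cup\{0\}=\F_{q^d}$, hence $U$ is genuinely an $\F_{q^d}$-subspace. Therefore $\F_{q^d}$ is exactly the largest field over which $U$ is a subspace. Combining the two paragraphs: the orbit size is $(q^n-1)/(q^d-1)$ if and only if $\mathrm{Stab}(U)\cup\{0\}=\F_{q^d}$, if and only if $\F_{q^d}$ is the largest field of linearity of $U$; this is the claimed equivalence (and the ``only if'' direction uses that the function $d'\mapsto (q^n-1)/(q^{d'}-1)$ is injective on divisors of $n$, so the orbit size determines $d$).

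The only genuinely delicate point is the verification that the stabilizer, together with $0$, is closed under addition — everything else is the orbit–stabilizer theorem together with the dimension-counting remark that multiplication by a nonzero scalar is a bijection preserving $\fq$-dimension. I expect that step to be the main (though modest) obstacle, and it is worth stating cleanly: for $\alpha,\beta\in\mathrm{Stab}(U)$ with $\alpha+\beta\ne 0$ one has $(\alpha+\beta)U\subseteq U$ and $\dim_{\fq}((\alpha+\beta)U)=\dim_{\fq}(U)$, whence $(\alpha+\beta)U=U$; the remaining field axioms for $\mathrm{Stab}(U)\cup\{0\}$ are immediate. Once this is in place the theorem follows by the subgroup/subfield correspondence for $\fqn$.
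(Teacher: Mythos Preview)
Your argument is correct and is the standard proof: orbit--stabilizer reduces the question to identifying $\mathrm{Stab}(U)\cup\{0\}$ with the maximal field of linearity of $U$, and your verification that this set is a subfield (in particular the additive closure via $(\alpha+\beta)U\subseteq U$ together with dimension preservation) is clean and complete. Note, however, that the paper does not supply its own proof of this theorem at all---it is simply quoted as \cite[Theorem 1]{Otal}---so there is no in-paper argument to compare against; your proposal is precisely the kind of proof one would expect to find in the cited source.
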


In particular, every orbit of a subspace $V \in \mathcal{G}_q(n,k)$ defines a cyclic subspace code of size $(q^n-1)/(q^t-1)$, for some $t \mid n $. Assume $k > 1$. 
Clearly, a cyclic subspace code generated by an orbit of a subspace $V$ with size $(q^n-1)/(q-1)$ has minimum distance at most $2k-2$ and in \cite{Trautmann} Trautmann, Manganiello, Braun and Rosenthal conjectured the existence of  a cyclic code of size $\frac{q^n-1}{q-1}$ in $\mathcal{G}_q(n,k)$ and minimum distance $2k-2$ for every positive integers $n,k$ such that $k\leq n/2$.

Ben-Sasson, Etzion, Gabizon and Raviv in \cite{BEGR} used subspace polynomials to generate cyclic subspace codes with size $\frac{q^n-1}{q-1}$ and minimum distance $2k-2$, proving that the conjecture is true for any given $k$ and infinitely many values of $n$. Such result was then improved in \cite{Otal} (see also \cite{BEGR,Otal,SantZullo1}). 
Finally, the conjecture was solved in \cite{Roth} for most of the cases, by making use of \emph{Sidon spaces}. They were originally introduced in \cite{BSZ2015}, as an important tool to prove the linear analogue of Vosper's Theorem, which analyzes the equality in the linear analouge of Kneser's theorem proved in \cite{BSZ2018,HouLeungXiang2002}.

An $\fq$-subspace $U$ of $\fqn$ is called a \textbf{Sidon space} if the product of any two elements of $U$ has a unique factorization over $U$, up to multiplying by some elements in $\fq$.
More precisely, $U$ is a Sidon space if for all nonzero $a,b,c,d \in U$, if $ab=cd$, then 
\[ \{a\fq, b\fq\}=\{c\fq, d\fq\}, \]
where if $e\in \fqn$ then $e\fq=\{e \lambda \colon \lambda \in \fq\}$.
Sidon spaces may be seen as the $q$-analogue of \textbf{Sidon set}, that is a subset $A$ in a commutative group with the property that the sums of two elements of $A$ are all distinct except when they coincide because of commutativity. The name comes back to Simon Szidon and was given by Erd\H{o}s in \cite{Erdos}.
Sidon sets have been then intensively studied in several contexts, see e.g.\ \cite{Babai1985,Cill2012,CRC2010Sidon}. 
For a survey the reader is referred to \cite{surverySidon}.

The connection between Sidon spaces and cyclic subspace codes is the following. 

\begin{theorem}\cite[Lemma 34]{Roth}\label{lem:charSidon2}
Let $U$ be an $\fq$-subspace of $\fqn$ of dimension $k$. Then $C_U$ is a cyclic subspace code of size $\frac{q^n-1}{q-1}$ and minimum distance $2k-2$ if and only if $U$ is a Sidon space.
\end{theorem}

In this paper we deal with cyclic subspace codes containing more than one orbit, which  we will refer to as \textbf{multi-orbit cyclic subspace codes}.
More precisely, our aim is to study the following codes. 
Let $U_1,\ldots,U_r$ be $\fq$-subspaces of dimension $k$ in  $\fqn$ and let
\begin{equation}\label{eq:multiorbitcodes} C=\bigcup_{i \in \{1,\ldots,r\}} C_{U_i} \subseteq \mathcal{G}_{q}(n,k)
\end{equation}
with minimum distance $2k-2$ and of size $r\frac{q^n-1}{q-1}$.
These codes have the same minimum distance of the codes associated with Sidon spaces but they are larger, and hence more interesting from a coding theory point of view.
See \cite{code2,code1,code3} for recent constructions.

The code $C$ is uniquely defined by the subspaces $U_1,\ldots,U_r$ and therefore we give the following two definitions.
The set $\{U_1,\ldots,U_r\}$ of a cyclic subspace code as in  \eqref{eq:multiorbitcodes} will said to be a set of \textbf{representatives} of $C$, whereas if also its minimum distance is $2k-2$ we will call it a \textbf{multi-Sidon space}, in connection with the correspondence between cyclic subspace codes with these parameters with one orbit and Sidon spaces. Equivalently, the set $\{U_1,\ldots,U_r\}$ is a multi-Sidon space if
\[\dim_{\fq} (U_i \cap \alpha U_j) \leq 1\]
for every $\alpha \in \fqn$ and $i,j \in \{1,\ldots,r\}$ with $i\ne j$ and for every $\alpha \in \fqn\setminus\fq$ if $i=j$.

Some of the results we will show do not require that all the subspaces have the same dimension $k$. In this case, the associated code will be still cyclic but not a constant dimension code.

In this paper, we first investigate multi-orbit cyclic subspace codes by analyzing their properties and providing characterizations, involving the Cartesian product of a set of representatives. Then we derive a canonical form for multi-orbit cyclic codes in $\mathcal{G}_q(n,n/2)$ (and hence of multi-Sidon spaces), when $n$ is even, through some linearized polynomials jointly with direct conditions that can be checked to establish whether or not a set of subspaces is a multi-Sidon space. Then we used these conditions to find a family of multi-orbit cyclic subspace codes defined by linearized monomials, extending the family presented by Roth, Raviv and Tamo in \cite{Roth}, and we can also solve the equivalence issue among the codes belonging to this larger family.
We gave a geometric interpretation of the Sidon property in terms of linear sets. This connection yields some new bounds on the parameters involved in a multi-Sidon space and hence on the associated subspace code. 
With the aid of multi-orbit cyclic subspace codes, we study a special class of linear sets in $\PG(r-1,q^n)$ which may be defined by $r$ points in independent position, which naturally extends those investigated in \cite{NPSZ2021} with Napolitano, Santonastaso and Polverino. In particular, we provide bounds on the rank of such linear sets and we then concentrate our attention on those that have maximum rank. Constructions of such linear sets can be obtained by using multi-orbit cyclic subspace codes and we then analyze the equivalence issue among those. 
Moreover, when such linear sets have rank $n$ they can be described by projection maps, as shown in the appendix. 
Some of these properties are natural generalization of the results in \cite{NPSZ2021}, but unlike what happens in the projective line, the duals of the above linear sets of maximum rank have a special pattern of intersection with the hyperplanes. This means that such linear sets can be used to define linear rank metric codes and linear Hamming metric codes with only three weights (yielding also to \emph{almost MRD codes}) and for which one can completely determine their weight distribution.
We conclude the paper by showing possible problems that could be of interest for the reader.

\section{Preliminaries}

\subsection{Linearized polynomials}

Let $s$ be a positive integer such that $\gcd(s,n)=1$.
A $q^s$-\emph{linearized polynomial} (for short $q^s$-\emph{polynomial}) over $\F_{q^n}$ is a polynomial of the form
\[f(x)=\sum_{i=0}^{\ell} a_i x^{q^{is}},\]
where $a_i\in \F_{q^n}$ and $\ell$ is a positive integer.
Furthermore, if $a_\ell \neq 0$ we say that $\ell$ is the $q^s$-\emph{degree} of $f(x)$.
We will denote by $\mathcal{L}_{n,s}$ the set of all $q^s$-polynomials over $\F_{q^n}$ with $q^s$-degree less than $n$ (or $\mathcal{L}_{n}$ if $s=1$). Together with the classical sum of polynomials, the composition modulo $x^{q^n}-x$ and the scalar multiplication by an element in $\fq$, $\mathcal{L}_{n,s}$ forms an $\fq$-algebra isomorphic to the algebra of $\fq$-linear endomorphisms of $\fqn$.
For this reason, we shall identify the elements of $\cL_{n,s}$ with the
endomorphisms of $\F_{q^n}$ they represent and hence we will also speak of \emph{kernel} and \emph{rank} of a $q^s$-polynomial.
Clearly, the kernel of $f(x)\in \cL_{n,s}$ coincides with the set of the roots of $f(x)$ over $\F_{q^n}$.

We now recall an important result on linearized polynomials which will play a crucial role in the construction of examples of cyclic subspace codes.
This puts together \cite[Lemma 3.2]{Guralnick} and \cite[Theorem 10]{GQ2009x} (see also \cite[Theorem 5]{GQ2009}).

\begin{theorem}\label{Gow}
Consider
\[f(x)=a_0x+a_1x^{q^s}+\cdots+a_{k-1}x^{q^{s(k-1)}}+a_kx^{q^{sk}}\in \mathcal{L}_{n,s},\]
with $k\leq n-1$ and let $a_0,a_1,\ldots,a_k$ be elements of $\F_{q^n}$ not all of them zero. Then 
\[ \dim_{\F_q}(\ker (f(x)))\leq k. \]
Moreover, if $\dim_{\F_q}(\ker (f(x)))=k$ then $\N_{q^n/q}(a_0)=(-1)^{nk}\N_{q^n/q}(a_k)$.
\end{theorem}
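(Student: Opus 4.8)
The plan is to prove the bound $\dim_{\F_q}(\ker f)\le k$ first, and then extract the norm condition in the extremal case by a resultant/Wronskian-style argument. For the dimension bound, suppose to the contrary that $f$ has a root space $W$ of dimension at least $k+1$. Pick an $\F_q$-basis $w_0,\ldots,w_k$ of a $(k+1)$-dimensional subspace of $\ker f$. Evaluating $f$ on these basis vectors gives a homogeneous linear system in the coefficients $a_0,\ldots,a_k$: since each $w_j$ is a root, we get $\sum_{i=0}^k a_i w_j^{q^{si}}=0$ for $j=0,\ldots,k$. This is a square $(k+1)\times(k+1)$ system whose coefficient matrix is the Moore-type matrix $M=\bigl(w_j^{q^{si}}\bigr)_{0\le i,j\le k}$. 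The key classical fact (the $q^s$-analogue of the Moore/Vandermonde determinant, valid since $\gcd(s,n)=1$) is that $\det M\ne 0$ precisely when $w_0,\ldots,w_k$ are $\F_q$-linearly independent; hence the only solution is $a_0=\cdots=a_k=0$, contradicting the hypothesis that the $a_i$ are not all zero. This forces $\dim_{\F_q}(\ker f)\le k$.

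Next, assume equality holds, so there is an $\F_q$-basis $w_1,\ldots,w_k$ of $\ker f$, and we want $\N_{q^n/q}(a_0)=(-1)^{nk}\N_{q^n/q}(a_k)$. The idea is to relate the top and bottom coefficients of $f$ to a Moore-type determinant built from the root space, then take norms. Concretely, since $\ker f$ has dimension exactly $k$, the polynomial $f$ is (up to a scalar) the "annihilator" $q^s$-polynomial of $W=\ker f$, i.e. $f(x)=c\cdot\prod'(x - w)$ in the skew/composition sense — more precisely, up to the scalar $a_k$, $f$ equals the monic $q^s$-polynomial of $q^s$-degree $k$ whose root set is exactly $W$. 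Writing this out, the constant coefficient of the monic annihilator is $(-1)^?$ times the Moore determinant $\Delta(w_1,\ldots,w_k)/\Delta$-minor ratio, and one computes $a_0/a_k$ as an explicit alternating product of such determinants. Taking the field norm $\N_{q^n/q}$ of this ratio, the Frobenius-twisted Moore determinants all have the same norm (since raising to $q^s$ is a field automorphism fixing $\F_q$ and permuting conjugates), so the Moore-determinant factors cancel in norm up to a sign, and the sign works out to $(-1)^{nk}$ by tracking how many transpositions / how many $q^s$-powers enter — this is exactly the content quoted from \cite[Lemma 3.2]{Guralnick} and \cite[Theorem 10]{GQ2009x}.

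I would organize the write-up as: (1) recall/state the $q^s$-Moore determinant nonvanishing criterion; (2) deduce $\dim_{\F_q}(\ker f)\le k$ via the square linear system as above; (3) in the equality case, express $f$ as $a_k$ times the monic annihilator $q^s$-polynomial of its kernel and read off $a_0/a_k$ as a determinant ratio; (4) apply $\N_{q^n/q}$ and carefully account for the sign to get $(-1)^{nk}$. Since the statement is explicitly attributed to \cite{Guralnick} and \cite{GQ2009x}, the honest approach is to give steps (1)–(2) in full (they are short and clean) and for steps (3)–(4) either cite the two references directly for the norm identity or sketch the determinant bookkeeping, flagging that the precise sign tracking is the delicate point.

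The main obstacle is exactly the sign $(-1)^{nk}$ in the equality case: the dimension bound is routine Moore-determinant linear algebra, but pinning down the sign requires carefully combining the parity of the permutation appearing in the Moore determinant expansion with the effect of the $n$ applications of Frobenius implicit in taking the norm over $\F_q$. One must be careful that $\N_{q^n/q}(a_k^{q^{s}})=\N_{q^n/q}(a_k)$ (true, since $x\mapsto x^{q^s}$ is a field automorphism) and that the Moore determinant of $(w_j^{q^{si}})$ and of $(w_j^{q^{s(i+1)}})$ differ by an $\F_q$-norm-$1$ factor, so only the leftover sign survives — and getting that residual sign to be $(-1)^{nk}$ rather than, say, $(-1)^{k}$ or $+1$ is the crux. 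Given the attribution, I would lean on the cited results for this last sign computation rather than reproving it.
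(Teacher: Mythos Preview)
The paper does not give its own proof of this theorem: it is stated as a quoted result, with the sentence ``This puts together \cite[Lemma 3.2]{Guralnick} and \cite[Theorem 10]{GQ2009x} (see also \cite[Theorem 5]{GQ2009})'' immediately before the statement, and no \texttt{proof} environment follows. So there is nothing in the paper to compare your argument against; the paper simply imports the result from the cited references.

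That said, your sketch is the standard line of argument and matches what one finds in those references. The dimension bound via the $q^s$-Moore determinant (your steps (1)--(2)) is exactly the content of Guralnick's lemma, and your steps (3)--(4) correctly identify the mechanism behind the norm condition: in the extremal case $f$ is a scalar multiple of the annihilator $q^s$-polynomial of its kernel, and comparing top and bottom coefficients via Moore-determinant ratios, then taking norms, produces the sign $(-1)^{nk}$. Your caution about the sign being the delicate point is well placed; in a self-contained write-up you would indeed want to track it carefully, but since the paper itself is content to cite \cite{GQ2009x} for this, doing the same is entirely in keeping with the paper's treatment.
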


An important example of a linearized polynomial is given by the trace function, which can be defined as
\[ \mathrm{Tr}_{q^n/q}(x)=x+x^q+\ldots+x^{q^{n-1}}. \]

For more details on linearized polynomials we refer to \cite[Chapter 3, Section 4]{lidl_finite_1997}.

\subsection{Linear sets}

A point set $L$ of $\Lambda=\PG(V,\F_{q^n})\allowbreak=\PG(r-1,q^n)$ is said to be an \textbf{$\F_q$-linear set} of $\Lambda$ of rank $k$ if it is defined by the non-zero vectors of a $k$-dimensional $\F_q$-vector subspace $U$ of $V$, i.e.
\[L=L_U:=\{\la {\bf u} \ra_{\mathbb{F}_{q^n}} : {\bf u}\in U\setminus \{{\bf 0} \}\}.\]
We denote the rank of an $\fq$-linear set $L_U$ by $\mathrm{Rank}(L_U)$.
For any subspace $S=\PG(Z,\mathbb{F}_{q^n})$ of $\Lambda$, the \textbf{weight} of $S$ in $L_U$ is defined as $w_{L_U}(S)=\dim_{\mathbb{F}_q}(U\cap Z)$.
If $N_i$ denotes the number of points of $\Lambda$ having weight $i\in \{0,\ldots,k\}$  in $L_U$, the following relations hold:
\begin{equation}\label{eq:card}
    |L_U| \leq \frac{q^k-1}{q-1},
\end{equation}
\begin{equation}\label{eq:pesicard}
    |L_U| =N_1+\ldots+N_k,
\end{equation}
\begin{equation}\label{eq:pesivett}
    N_1+N_2(q+1)+\ldots+N_k(q^{k-1}+\ldots+q+1)=q^{k-1}+\ldots+q+1.
\end{equation}

We also recall that two linear sets $L_U$ and $L_W$ of $\PG(r-1,q^n)$ are said to be \textbf{$\mathrm{P\Gamma L}$-equivalent} (or simply \textbf{equivalent}) if there is an element $\varphi$ in $\mathrm{P\Gamma L}(r,q^n)$ such that $L_U^{\varphi} = L_W$.
In general, it is very hard to determine  whether or not two linear sets are projective equivalent, see e.g. \cite{BoPol,CsMP,CsZan}.

Another notion that we will need is that of the dual of a linear set.
Let $L_U$ be an $\fq$-linear set of rank $k$ in $\PG(V,\fqn)=\PG(r-1,q^n)$.
Let $\sigma \colon V \times V \rightarrow \fqn$ be a nondegenerate reflexive sesquilinear form on the $\fqn$-vector space $V$ and consider $\sigma'=\mathrm{Tr}_{q^n/q} \circ \sigma$, which is a nondegenerate reflexive sesquilinear form on $V$ seen as an $\fq$-vector space of dimension $rn$. Then we may consider $\perp$ and $\perp'$ as the orthogonal complement maps defined by $\sigma$ and $\sigma'$, respectively, and $\tau$ and $\tau'$ as the polarities of $\PG(V,\fqn)$ and $\PG(V,\fq)$ defined by $\perp$ and $\perp'$, respectively. Then $L_U^\tau=L_{U^{\perp'}}$ is an $\fq$-linear set in $\PG(V,\fqn)$ of rank $rn-k$ that is called the \textbf{dual linear set of $L_U$} with respect to the polarity $\tau$.
As proved in \cite[Proposition 2.5]{Polverino}, this linear set does not depend on the polarity $\tau$. Indeed, let  $\sigma_1 \colon V \times V \rightarrow \fqn$ be a nondegenerate reflexive sesquilinear form on the $\fqn$-vector space $V$ and let $\tau_1$ the related polarity, then $L_U^\tau$ and $L_U^{\tau_1}$ are  $\mathrm{P\Gamma L}(2,q^n)$-equivalent;
%Therefore the definition of the dual linear set does not depend on the choice of the polarity and when we will talk about the dual of $L_U$ we will denote it by $L_U^\tau$; 
see also \cite{trans}.

Moreover, the following result relates the weights of the subspaces with respect to the linear set and its dual.

\begin{proposition}\cite[Property 2.6]{Polverino}\label{prop:weightdual}
Let $L_U$ be an $\fq$-linear set of rank $k$ in $\PG(V,\fqn)=\PG(r-1,q^n)$. If $S=\PG(Z,\fqn)$ is an $s$-dimensional projective subspace then
\[ w_{L_U^\tau}(S^\tau)-w_{L_U}(S)=rn-k-(s+1)n. \]
\end{proposition}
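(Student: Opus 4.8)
Since this is a cited result, I would reconstruct the standard argument rather than invent something new. The plan is to exploit the fundamental dimension identity for orthogonal complements with respect to the trace-induced form $\sigma' = \mathrm{Tr}_{q^n/q}\circ\sigma$ on $V$ viewed as an $rn$-dimensional $\fq$-vector space. First I would fix a projective subspace $S = \PG(Z,\fqn)$ of dimension $s$, so that $\dim_{\fqn}(Z) = s+1$ and $\dim_{\fq}(Z) = (s+1)n$. The key input is that $\perp'$ (the $\fq$-orthogonal complement for $\sigma'$) restricted to $\fqn$-subspaces agrees with $\perp$ (the $\fqn$-orthogonal complement for $\sigma$): one checks $Z^{\perp'} = Z^\perp$ because $\sigma'(v,w) = 0$ for all $w \in Z$ forces $\mathrm{Tr}_{q^n/q}(\lambda\sigma(v,w)) = 0$ for all $\lambda \in \fqn$, $w \in Z$, hence $\sigma(v,w) = 0$ by nondegeneracy of the trace. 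Therefore $\dim_{\fq}(Z^{\perp'}) = \dim_{\fq}(Z^\perp) = rn - (s+1)n$.

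Next I would apply the standard dimension formula for a subspace and its orthogonal complement under a nondegenerate form on a finite-dimensional $\fq$-space: for $\fq$-subspaces $U$ (of dimension $k$) and $Z^{\perp'}$ of the $rn$-dimensional space $V$,
\[
\dim_{\fq}(U \cap Z^{\perp'}) = \dim_{\fq}(U) + \dim_{\fq}(Z^{\perp'}) - \dim_{\fq}(U + Z^{\perp'}),
\]
combined with $\dim_{\fq}(U + Z^{\perp'}) = rn - \dim_{\fq}(U^{\perp'} \cap Z)$, which follows from the identity $(A+B)^{\perp'} = A^{\perp'}\cap B^{\perp'}$ applied to $A=U$, $B=Z^{\perp'}$ together with $(Z^{\perp'})^{\perp'} = Z$. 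Putting these together gives
\[
\dim_{\fq}(U \cap Z^{\perp'}) = k + \big(rn - (s+1)n\big) - rn + \dim_{\fq}(U^{\perp'}\cap Z) = \dim_{\fq}(U^{\perp'}\cap Z) + k - (s+1)n.
\]

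Finally I would translate this back into the language of weights. By definition $w_{L_U}(S) = \dim_{\fq}(U \cap Z)$ and, since $L_U^\tau = L_{U^{\perp'}}$ and $S^\tau = \PG(Z^{\perp'},\fqn)$, we have $w_{L_U^\tau}(S^\tau) = \dim_{\fq}(U^{\perp'} \cap Z^{\perp'})$. Applying the displayed identity with $Z$ replaced by $Z^{\perp'}$ (so that $Z^{\perp'}$ has $\fqn$-dimension $r - (s+1)$, i.e. plays the role of an $(r-s-2)$-dimensional projective subspace) and using $(Z^{\perp'})^{\perp'} = Z$, one obtains
\[
w_{L_U^\tau}(S^\tau) = \dim_{\fq}(U^{\perp'}\cap Z^{\perp'}) = \dim_{\fq}(U \cap Z) + k - (s+1)n \cdot(\text{corrected count}),
\]
and a careful bookkeeping of which subspace has which $\fq$-dimension yields exactly $w_{L_U^\tau}(S^\tau) - w_{L_U}(S) = rn - k - (s+1)n$. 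The main obstacle is purely bookkeeping: keeping straight the two orthogonality operators, ensuring $\perp'$ and $\perp$ coincide on $\fqn$-subspaces, and not conflating the $\fq$-dimension with the $\fqn$-dimension of $Z$; once the identification $Z^{\perp'} = Z^\perp$ and the relation $(Z^{\perp'})^{\perp'} = Z$ are in hand, everything reduces to a single linear-algebra dimension count.
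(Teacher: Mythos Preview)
The paper does not give its own proof of this proposition: it is quoted verbatim as \cite[Property 2.6]{Polverino} and used as a black box, so there is no in-paper argument to compare against.

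Your reconstruction follows the standard route and the key ingredients are correct, in particular the identification $Z^{\perp'}=Z^{\perp}$ for $\fqn$-subspaces $Z$ (via nondegeneracy of the trace) and the use of $(A+B)^{\perp'}=A^{\perp'}\cap B^{\perp'}$. However, the final paragraph does not actually carry out the computation: you derive the auxiliary identity $\dim_{\fq}(U\cap Z^{\perp'})=\dim_{\fq}(U^{\perp'}\cap Z)+k-(s+1)n$, which relates the wrong pair of intersections, and then trail off with ``(corrected count)''. Your own suggestion of replacing $Z$ by $Z^{\perp'}$ in that identity does work (using $(Z^{\perp'})^{\perp'}=Z$ and $\dim_{\fq}(Z^{\perp'})=(r-s-1)n$), but you should execute it rather than leave a placeholder. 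A more direct finish avoids the detour entirely: from $(U\cap Z)^{\perp'}=U^{\perp'}+Z^{\perp'}$ one gets
\[
rn-\dim_{\fq}(U\cap Z)=\dim_{\fq}(U^{\perp'}+Z^{\perp'})=(rn-k)+\bigl(rn-(s+1)n\bigr)-\dim_{\fq}(U^{\perp'}\cap Z^{\perp'}),
\]
and since $w_{L_U^\tau}(S^\tau)=\dim_{\fq}(U^{\perp'}\cap Z^{\perp'})$ and $w_{L_U}(S)=\dim_{\fq}(U\cap Z)$, the stated formula drops out immediately.
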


For further details on linear sets see \cite{LavVdV,Polverino}.

\section{Structure and properties of multi-orbit cyclic subspace codes}

In this section we will analyze properties of multi-orbit cyclic subspace codes, paying special attention to their algebraic description.
Interesting results have been proved in this direction for the case of Sidon spaces.

Let $U$ and $V$ be two $\fq$-subspaces of $\fqn$. Denote by $\langle U^2\rangle$ the $\fq$-span of $\{st\colon s,t \in U \}$, $U^{-1}=\{u^{-1} \colon u \in U\setminus\{0\}\}$ and $U\cdot V=\{uv \colon u \in U, v \in V\}$.

Bachoc, Serra and Z\'emor proved a lower bound on the dimension of a Sidon space in \cite[Theorem 18]{BSZ2015} and hence the following holds.

\begin{theorem}
If $U$ is a Sidon space in $\fqn$ of dimension $k\geq 3$, then
\[ 2k\leq \dim_{\fq}(\langle U^2\rangle )\leq {k+1 \choose 2}. \]
\end{theorem}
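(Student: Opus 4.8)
The plan is to prove the two inequalities separately, both by elementary counting arguments on the spanning set $\{st : s,t \in U\}$.

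For the \textbf{upper bound}, I would simply observe that $\langle U^2\rangle$ is spanned by the products $u_iu_j$ where $u_1,\dots,u_k$ is an $\fq$-basis of $U$, and by commutativity of $\fqn$ there are only $\binom{k+1}{2} = \binom{k}{2}+k$ distinct such products $u_iu_j$ with $i\le j$. Hence $\dim_{\fq}\langle U^2\rangle \le \binom{k+1}{2}$; this part needs no Sidon hypothesis at all.

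For the \textbf{lower bound} $2k \le \dim_{\fq}\langle U^2\rangle$, I would fix a nonzero $u_0\in U$ and consider the $k$-dimensional subspace $u_0U = \{u_0 u : u\in U\}\subseteq \langle U^2\rangle$, as well as $u_1 U$ for a second element $u_1\in U$ with $u_1 \notin \fq u_0$ (possible since $k\ge 3 \ge 2$). Both $u_0 U$ and $u_1 U$ are $k$-dimensional (multiplication by a nonzero scalar is an $\fq$-linear bijection) and both lie in $\langle U^2\rangle$. The key point is to bound $\dim_{\fq}(u_0 U \cap u_1 U)$ from above: an element in the intersection has the form $u_0 a = u_1 b$ with $a,b\in U$, i.e. $a = (u_1/u_0) b$, so $a \in U \cap \alpha U$ where $\alpha = u_1/u_0 \notin \fq$. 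By Theorem~\ref{lem:charSidon} (using that $U$ is a Sidon space with full orbit, which is exactly the hypothesis), $\dim_{\fq}(U\cap \alpha U)\le 1$, whence $\dim_{\fq}(u_0 U\cap u_1 U) = \dim_{\fq}(u_0 U \cap u_0\alpha U) = \dim_{\fq}(U\cap \alpha U)\le 1$. Then the dimension formula gives
\[
\dim_{\fq}\langle U^2\rangle \;\ge\; \dim_{\fq}(u_0 U + u_1 U) \;=\; \dim_{\fq}(u_0 U) + \dim_{\fq}(u_1 U) - \dim_{\fq}(u_0 U\cap u_1 U) \;\ge\; k + k - 1 \;=\; 2k-1.
\]
To upgrade $2k-1$ to $2k$, I would note that if $u_0 U \cap u_1 U$ is actually $1$-dimensional, one still has a third basis element $u_2\in U$ available (here $k\ge 3$ is used) and a small refinement — e.g. picking $u_2$ so that $u_2 U$ is not contained in $u_0 U + u_1 U$, or applying the Sidon property once more to a suitable pair — yields one extra dimension; alternatively, the cleanest route is to cite \cite[Theorem~18]{BSZ2015} directly as the statement already does (the theorem is phrased as a consequence of that result), so the lower bound $2k$ is exactly the content being imported.

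The \textbf{main obstacle} is the gap between the easy bound $2k-1$ and the asserted $2k$: squeezing out the final dimension requires the hypothesis $k\ge 3$ in an essential way and a slightly more careful argument than the two-subspace dimension count above. Since the statement explicitly attributes the lower bound to \cite[Theorem~18]{BSZ2015}, the intended proof is presumably to invoke that theorem verbatim; the self-contained argument sketched here reproduces it up to the last unit, and the refinement for the final dimension is where the real work (done in \cite{BSZ2015}) lies.
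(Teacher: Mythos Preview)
The paper supplies no proof of this theorem at all: it is stated as an immediate consequence of \cite[Theorem~18]{BSZ2015}, with the upper bound left implicit (it is the trivial count you give). Your identification of citation as the intended route is therefore exactly right, and your self-contained sketch already goes further than the paper does; as you correctly flag, the passage from $2k-1$ to $2k$ is the genuine content of \cite{BSZ2015}, and your suggestion of throwing in a third translate $u_2U$ does not by itself close that gap without further argument.
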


Clearly, this result implies that if a cyclic subspace code $C_U$ has minimum distance $2k-2$, where $k=\dim_{\fq}(U)$, then $2k\leq n$.

We can hence apply the above result to all the subspaces of a multi-Sidon space, obtaining the following bounds.

\begin{corollary}
Let $\{U_1,\ldots,U_r\}$ is a multi-Sidon space of $\fqn$.
Let $k_i=\dim_{\fq}(U_i)\geq 3$ for any $i \in \{1,\ldots,r\}$. Then
\[ 2\sum_{i=1}^r k_i \leq \dim_{\fq}(\langle U_1^2\rangle \times \ldots \times \langle U_r^2\rangle )\leq \sum_{i=1}^r {k_i+1 \choose 2}. \]
In particular, $k_i \leq n/2$ for each $i \in \{1,\ldots,r\}$.
\end{corollary}

%Let $\{U_1,\ldots,U_r\}$ be a multi-Sidon space of $\fqn$ and let $k_i=\dim_{\fq}(U_i)$ for any $i \in \{1,\ldots,r\}$.

A multi-Sidon space $\{U_1,\ldots,U_r\}$ of $\fqn$, with $k_i=\dim_{\fq}(U_i)$ for any $i \in \{1,\ldots,r\}$, is said to be \textbf{maximum} if $n$ is even and $k_i=n/2$ for every $i \in \{1,\ldots,r\}$. The interest for maximum multi-Sidon spaces arise from the fact that the associated codes are those with the largest minimum distance, that is $n-2$.

In \cite{Roth}, it has been described the property of being a Sidon space with an algebraic flavour, using the uniqueness of the factorization of the product of two elements which are in two distinct subspaces.

\begin{lemma}\cite[Lemma 36]{Roth}\label{lem:charalg}
Let $U, V$ be two distinct $\fq$-subspaces of dimension $k$ of $\fqn$. Then the following conditions are equivalent 
\begin{enumerate}
    \item $\dim_{\fq} (U \cap \alpha V) \leq 1$, for any $\alpha \in \F_{q^n}$;
    \item For any nonzero $a,c \in U$ and nonzero $b,d \in V$, the equality $ab=cd$ implies that $a\F_q=c \F_q$ and $b \F_q=d \F_q$.
\end{enumerate}
\end{lemma}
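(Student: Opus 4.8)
The plan is to prove the two implications separately, exploiting the fact that both conditions are symmetric in $U$ and $V$ (condition (1) because $\dim_{\fq}(U\cap\alpha V)\le 1$ is equivalent to $\dim_{\fq}(\alpha^{-1}U\cap V)\le 1$, and condition (2) visibly). First I would prove $(2)\Rightarrow(1)$ by contrapositive: suppose $\dim_{\fq}(U\cap\alpha V)\ge 2$ for some $\alpha\in\fqn$. Then there exist $\fq$-linearly independent elements $a,c\in U$ lying in $\alpha V$, so we may write $a=\alpha b$ and $c=\alpha d$ for some $b,d\in V$, necessarily nonzero and $\fq$-linearly independent. Then $ad=\alpha bd=cb$, i.e.\ $ad=bc$ with $a,c\in U$ (swapping roles) — more carefully, from $a=\alpha b$, $c=\alpha d$ we get $a d = \alpha b d = c b$, which is an equation of the form $(\text{elt of }U)\cdot(\text{elt of }V)=(\text{elt of }U)\cdot(\text{elt of }V)$ with $ad=cb$. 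Since $a,c$ are $\fq$-independent we have $a\fq\ne c\fq$, contradicting (2). (One must be slightly careful to set up the product so that one factor of each side is in $U$ and the other in $V$; this is just bookkeeping with the substitution $a=\alpha b$.)

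Next I would prove $(1)\Rightarrow(2)$. Assume $\dim_{\fq}(U\cap\alpha V)\le 1$ for all $\alpha$, and suppose $ab=cd$ with $a,c\in U\setminus\{0\}$, $b,d\in V\setminus\{0\}$. Set $\alpha=a/d=c/b\in\fqn^*$ (the two expressions agree precisely because $ab=cd$). Then $a=\alpha d$ and $c=\alpha b$, so both $a$ and $c$ lie in $U\cap\alpha V$. By hypothesis this intersection has $\fq$-dimension at most $1$, and since $a,c$ are nonzero they span the same $1$-dimensional $\fq$-space, i.e.\ $a\fq=c\fq$. Writing $c=\lambda a$ with $\lambda\in\fq^*$, the equation $ab=cd$ becomes $ab=\lambda a d$, hence $b=\lambda d$ (cancelling the nonzero $a$ in the field $\fqn$), so $b\fq=d\fq$ as well. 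This gives (2).

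The only mild subtlety — and the step I'd flag as the one needing care rather than a genuine obstacle — is the correct pairing of factors when passing between the "intersection" picture and the "factorization" picture: one has to keep track of which subspace each of $a,b,c,d$ belongs to and choose the scalar $\alpha$ consistently (as $a/d$ versus $c/b$), using the hypothesis $ab=cd$ to see these coincide. Note also that the hypothesis $U\ne V$ and $\dim U=\dim V=k$ is not actually needed for the equivalence $(1)\Leftrightarrow(2)$ as stated; it is presumably included only to match the intended application, so I would not use it. Everything else is elementary manipulation inside the field $\fqn$, using only that nonzero field elements are invertible and that $\fq$-proportionality of two nonzero elements is exactly the statement that they generate the same $1$-dimensional $\fq$-subspace.
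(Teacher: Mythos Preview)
Your argument is correct. Note, however, that the paper does not supply its own proof of this lemma: it is quoted verbatim as \cite[Lemma~36]{Roth} and used without proof, so there is nothing in the paper to compare against directly. That said, your approach matches exactly the reasoning the paper does spell out a few lines later when proving the equivalence (ii)$\Leftrightarrow$(iii) in Theorem~\ref{th:character}: there too one passes between the intersection condition and the factorization/ratio condition by writing $a=\alpha d$, $c=\alpha b$ with $\alpha=a/d=c/b$, and uses $\fq$-proportionality of $a,c$ inside a one-dimensional intersection. Your remark that the hypotheses $U\neq V$ and $\dim U=\dim V=k$ play no role in the equivalence is also correct.
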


Lemma \ref{lem:charalg} can be also applied to a set of subspaces $\mathcal{U}$, extending the uniqueness of the factorization of the product of two elements which are in two distinct subspaces in $\mathcal{U}$.

\begin{corollary}\label{cor:UiUj}
Let $\{U_1,\ldots,U_r\}$ be a set of $\fq$-subspaces of $\fqn$. 
Then the following conditions are equivalent 
\begin{enumerate}
    \item $\dim_{\fq} (U_i \cap \alpha U_j) \leq 1$, for any $\alpha \in \F_{q^n}$ and $i,j \in \{1,\ldots,r\}$ with $i\ne j$;
    \item For any nonzero $a,c \in U_i$ and nonzero $b,d \in U_j$, the equality $ab=cd$ implies that $a\F_q=c \F_q$ and $b \F_q=d \F_q$, for any $i,j \in \{1,\ldots,r\}$ with $i\ne j$.
\end{enumerate}
\end{corollary}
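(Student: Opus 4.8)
The plan is to reduce the statement directly to Lemma \ref{lem:charalg}, applied separately to each unordered pair of distinct subspaces in the set. The two conditions to be proved equivalent are each quantified over all pairs $i \neq j$, so it suffices to fix such a pair and show the corresponding pair-wise statements coincide, and then conjoin over all pairs. Concretely, for a fixed pair $i,j$ with $i \neq j$, set $U = U_i$ and $V = U_j$; I would like to invoke Lemma \ref{lem:charalg} to conclude that "$\dim_{\fq}(U_i \cap \alpha U_j) \le 1$ for all $\alpha$" is equivalent to "for all nonzero $a,c \in U_i$ and nonzero $b,d \in U_j$, $ab = cd$ implies $a\fq = c\fq$ and $b\fq = d\fq$."

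The first obstacle is that Lemma \ref{lem:charalg} is stated for two \emph{distinct} subspaces of \emph{equal dimension} $k$, whereas the corollary allows the $U_i$ to have different dimensions and does not a priori assume $U_i \neq U_j$. I would handle the dimension hypothesis first: the equal-dimension assumption is not actually used in the proof of Lemma \ref{lem:charalg} (the argument is about the multiplicative structure $U \cap \alpha V$ and factorizations $ab = cd$, not dimension counts), so one can either cite it as applying verbatim, or, to be safe, embed $U_i$ and $U_j$ into a common subspace of larger dimension, or simply re-run the short argument of Lemma \ref{lem:charalg} without the dimension hypothesis. For distinctness, note that if $U_i = U_j$ for some $i \neq j$ then condition (1) fails trivially (take $\alpha = 1$, so $U_i \cap \alpha U_j = U_i$ has dimension $k_i \ge 2$), and condition (2) also fails: picking linearly independent $a, c \in U_i = U_j$ and setting $b = c$, $d = a$ gives $ab = ac = cd$ with $a\fq \ne c\fq$. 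Hence when $U_i = U_j$ both conditions are false and the equivalence holds vacuously; when $U_i \neq U_j$, Lemma \ref{lem:charalg} applies.

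After these reductions, the proof is essentially bookkeeping: condition (1) of the corollary is the conjunction over all pairs $i \neq j$ of condition (1) of Lemma \ref{lem:charalg} for $(U_i, U_j)$, and likewise condition (2) of the corollary is the conjunction over all such pairs of condition (2) of Lemma \ref{lem:charalg}. Since each conjunct-pair is equivalent, the conjunctions are equivalent, which is exactly the claim. I expect the only genuinely delicate point to be confirming that Lemma \ref{lem:charalg} (or its underlying argument) does not rely on $\dim_{\fq}(U) = \dim_{\fq}(V)$; once that is dispatched, the rest is a routine pair-by-pair unwinding of the quantifiers, and I would present it in two or three sentences.
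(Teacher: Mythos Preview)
Your approach is correct and matches the paper's: the corollary is stated immediately after Lemma~\ref{lem:charalg} with no separate proof, the text simply noting that the lemma ``can be also applied to a set of subspaces,'' i.e.\ one applies it pairwise exactly as you do. Your extra care about the equal-dimension and distinctness hypotheses of Lemma~\ref{lem:charalg} is more than the paper provides (and note that since $\{U_1,\ldots,U_r\}$ is declared as a \emph{set}, the $U_i$ are automatically pairwise distinct, so that case need not be treated separately).
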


The property of being a multi-Sidon space is inherited by the Cartesian product of such subspaces as follows.
To this aim, let $\mathbf{e}_i\in \fqn^r$ be the vector whose $i$-th component is $1$ and all the others are zero.

\begin{theorem}\label{th:character}
Let $\{U_1,\ldots,U_r\}$ be a set of $\fq$-subspaces of $\fqn$. 
%Let $k_i=\dim_{\fq}(U_i)\geq 2$ for any $i \in \{1,\ldots,r\}$.
Then the following are equivalent:
\begin{itemize}
\item[i)] the $\fq$-subspace $U=U_1\times \ldots \times U_r$ of $\fqn^r$ is such that $\dim_{\fq}(U\cap \langle \mathbf{v}\rangle_{\fqn})\leq 1$ for any $\mathbf{v}\in \fqn^r$ such that $\langle\mathbf{v}\rangle_{\fqn}\notin \{\langle \mathbf{e}_i \rangle_{\fqn} \colon i \in \{1,\ldots,r\}\}$;
\item[ii)] $U_i\cdot U_i^{-1}\cap U_j\cdot U_j^{-1}=\fq$ for every $i\ne j$;
\item[iii)] $\dim_{\fq}(U_i\cap \alpha U_j)\leq 1$, for every $\alpha \in \fqn$ and $i,j \in \{1,\ldots,r\}$ such that $i\ne j$.
\end{itemize}
\end{theorem}
\begin{proof}
Consider a nonzero $\lambda \in U_i\cdot U_i^{-1}\cap U_j\cdot U_j^{-1}$ for some $i,j \in \{1,\ldots,r\}$ with $i\ne j$ and assume that i) holds.
Then there exist $x,x' \in U_i$ and $y,y' \in U_j$ such that 
\begin{equation}\label{eq:lambda} \lambda=\frac{x}{x'}=\frac{y}{y'}.
\end{equation}
If $\lambda$ were not in $\fq$, then we would have a contradiction. 
Indeed, consider the vector $\mathbf{u}$ whose $i$-th entry is $x$, $j$-th entry is $y$ and all the other entries are zero and the vector $\mathbf{u}'$ whose $i$-th entry is $x'$, $j$-th entry is $y'$ and all the other entries are zero, whereas $x,y,x',y'$ are nonzero.
By \eqref{eq:lambda} we obtain $\mathbf{u}=\lambda \mathbf{u}'$, a contradiction to a).
Now, suppose that ii) holds.
Let $\mathbf{v}\in U$ such that $\langle\mathbf{v}\rangle_{\fqn}\notin \{\langle \mathbf{e}_i \rangle_{\fqn} \colon i \in \{1,\ldots,r\}\}$ and $\dim_{\fq}(U\cap \langle \mathbf{v}\rangle_{\fqn})\geq 2$. Then there exist $i,j \in \{1,\ldots,r\}$ such that $v_i\ne 0$ and $v_j\ne 0$ and $\rho \in \fqn\setminus \fq$ such that $\rho \mathbf{v}=\mathbf{u}$ with $\mathbf{u}\in U$. In particular $u_i,u_j$ are non zero and
\[ \rho=\frac{u_i}{v_i}=\frac{u_j}{v_j}\in U_i\cdot U_i^{-1}\cap U_j\cdot U_j^{-1}, \]
and hence we get a contradiction since $\rho \notin \fq$.
Therefore, we have proved that i) and ii) are equivalent.
We now conclude the proof by proving the equivalence between ii) and iii).
Suppose that $U_i\cdot U_i^{-1} \cap U_j \cdot U_j^{-1}=\F_q$ for every $i,j \in \{1,\ldots,r\}$ with $i\ne j$.
By contradiction, assume the existence of $\alpha \in \F_{q^n}^*$ such that $\dim_{\fq}(U_i \cap \alpha U_j) \geq 2$ for some $i,j \in \{1,\ldots,r\}$ with $i\ne j$.
Let $a_1$ and $a_2$ be two $\fq$-linearly independent elements in $U_i \cap \alpha U_j$.
Moreover, since $a_1,a_2 \in \alpha U_j$, there exist $b_1,b_2 \in U_j\setminus\{0\}$ such that $a_1=\alpha b_1$ and $a_2=\alpha b_2$, so that 
\[a_1a_2^{-1}=b_1b_2^{-1}\in U_i\cdot U_i^{-1}\cap U_j\cdot U_j^{-1}\] 
and hence, by assumption, $a_1a_2^{-1} \in \fq$, which is a contradiction.
Now, assume that $\dim_{\fq}(U_i\cap \alpha U_j)\leq 1$, for every $\alpha \in \fqn$ and $i,j \in \{1,\ldots,r\}$ such that $i\ne j$.
Let $\lambda \in U_i\cdot U_i^{-1} \cap U_j \cdot U_j^{-1} \setminus\{0\}$. Then there exist $a,c \in U_i\setminus\{0\}$ and $b,d \in U_j\setminus\{0\}$ such that $\lambda=ac^{-1}=db^{-1}$.
Then $a,c \in U_i \cap \beta U_j$ where $\beta=a/d=c/b$. Since $\dim_{\fq}(U_i \cap \beta U_j)\leq 1$, it follows that $\lambda=ac^{-1} \in \F_q$.
\end{proof}

The above theorem give equivalent conditions to the property of being a multi-Sidon space when we consider a set of Sidon spaces. This result will also be very important in studying the equivalence among linear sets defined by $r$ independent points as we will see later.

We can now describe a canonical form of cyclic subspace codes with minimum distance $n-2$.

\begin{theorem}\label{th:canonicalform}
Let $n=2k\geq 4$, suppose that $\mathcal{U}=\{U_1,\ldots,U_r\}$ is a set of $\fq$-subspaces in $\fqn$ with dimension $k$ and let $C=\bigcup_{i=1}^r C_{U_i}$. Then there exists a set of representatives for $C$
\[ \{W_{f_1,\eta_1},\ldots,W_{f_r,\eta_r}\}, \]
where $f_1(x),\ldots,f_r(x) \in \mathcal{L}_{k}$, $\eta_1,\ldots,\eta_r \in \fqn\setminus\F_{q^k}$ and
\[ W_{f_i,\eta_i}=\{x+\eta_i f_i(x) \colon x \in \F_{q^k}\}, \]
for every $i \in \{1,\ldots,r\}$.
Let $\eta_i=A_{i,j}\eta_j+B_{i,j}$ and $\eta_i^2=a_i\eta_i+b_i$ with $A_{i,j},B_{i,j},a_i,b_i \in \F_{q^k}$ for any $i,j \in \{1,\ldots,r\}$.
Moreover, the minimum distance of $C$ is $n-2$ if and only if for every $i,j \in \{1,\ldots,r\}$ and $\alpha_0,\alpha_1 \in \F_{q^k}$ with $(\alpha_0,\alpha_1)\ne (0,0)$ the following linearized polynomial in $\mathcal{L}_{k}$ 
\[ F_{i,j}(x)=f_i(\alpha_0 x) + f_i(\alpha_1 A_{j,i} b_i f_j(x))+f_i(\alpha_0 B_{j,i} f_j(x))-\alpha_1 x -\alpha_0 A_{j,i} f_j(x)-\alpha_1 A_{j,i} a_i f_j(x)-\alpha_1 B_{j,i} f_j(x) \]
if $i\ne j$, and 
\[ F_{i,i}(x)=f_i(\alpha_0 x)+f(\alpha_1 b_i f_i(x))-\alpha_1 x -\alpha_0 f_j(x)-\alpha_1 a_i f_i(x) \]
if $i=j$, is either the zero polynomial or it has at most $q$ roots over $\F_{q^k}$.
\end{theorem}
\begin{proof}
Let $\eta_1,\ldots,\eta_r \in \fqn\setminus\F_{q^k}$.
Clearly, $\{1,\eta_i\}$ is an $\F_{q^k}$-basis of $\fqn$ for every $i$.
For each $i \in \{1,\ldots,r\}$, there exists $\lambda_i \in \fqn^*$ such that
\[ \lambda_i U_i \cap \eta_i \F_{q^k}=\{0\}. \]
By contradiction assume that $\lambda U_i \cap \eta_i \F_{q^k}\ne\{0\}$ for each $\lambda \in \fqn^*$. 
Consider the Desarguesian spread $\mathcal{D}=\{ \langle \mathbf{v} \rangle_{\F_{q^n}} \colon \mathbf{v}\in \fqn\times\fqn\setminus\{\mathbf{0}\} \}=\{\langle (1,\xi) \rangle_{\fqn} \colon \xi \in \fqn\}\cup \{ \langle (0,1) \rangle_{\fqn} \}$ of $\fqn\times \fqn$.
The $\fq$-subspace $U_i\times \eta_i \F_{q^k}=\{(a,b)\colon a \in U_i,b \in \eta_i \F_{q^k}\}$ meets all the elements of $\mathcal{D}$ in at least one non-zero vector.
Indeed, by the hypothesis 
\[ \{b/a \colon a \in U_i\setminus\{0\},b \in \eta_i \F_{q^k}\}=\fqn, \]
so that for every $\xi \in \fqn$ there exist $a \in U_i\setminus\{0\}$ and $b \in \eta_i \F_{q^k}$ such that $\xi=b/a$ and so
\[ \langle (1,\xi) \rangle_{\fqn} \cap (U_i\times \eta_i \F_{q^k}) \supseteq \langle (a,b)\rangle_{\fq}.  \]
Clearly, $\langle (0,1) \rangle_{\fqn} \cap (U_i\times \eta_i \F_{q^k})=U_i$ and hence $U_i\times \eta_i \F_{q^k}$ meets all the elements of $\mathcal{D}$ in at least one non-zero vector.
Since $\dim_{\fq}(U_i\times \eta_i \F_{q^k})=n$ this is a contradiction, see e.g.\ \cite[(3) Proposition 2.2]{Polverino}.
Therefore, there exists $\lambda_i \in \fqn^*$ such that
\[ \lambda_i U_i \cap \eta_i \F_{q^k}=\{0\}. \]
This implies that for every $i \in \{1,\ldots,r\}$, since $\{1,\eta_i\}$ is an $\F_{q^k}$-basis of $\fqn$, there exist $r$ linearized polynomials $f_i(x)\in \mathcal{L}_{k}$ such that
\[ \lambda_i U_i = W_{f_i,\eta_i}. \]
Now, let $i,j \in \{1,\ldots,r\}$ with $i\ne j$ and consider $w \in W_{f_i,\eta_i}\cap \alpha W_{f_j,\eta_j}$, for some $\alpha \in \fqn^*$.
Then there exist $u,v \in \F_{q^k}$ such that
\begin{equation}\label{eq:conditions} w=u+\eta_i f_i(u)=\alpha(v+\eta_j f_j(v)). \end{equation}
Since $\{1,\eta_i\}$ is an $\F_{q^k}$-basis of $\fqn$, $\eta_i^2=a_i\eta_i+b_i$ and $\eta_j=A_{j,i}\eta_i+B_{j,i}$, we get that $\alpha=\alpha_0+\alpha_1\eta_i$ for some $\alpha_0,\alpha_1 \in \F_{q^k}$ and \eqref{eq:conditions} may be rewritten as
\[ 
\left\{
\begin{array}{ll}
u=\alpha_0 v +\alpha_1 A_{j,i} b_i f_j(v)+\alpha_0 B_{j,i} f_j(v),\\
f_i(u)=\alpha_1 v +\alpha_0 A_{j,i} f_j(v)+\alpha_1 A_{j,i} a_i f_j(v)+\alpha_1 B_{j,i} f_j(v),
\end{array}
\right.
\]
from which we get
\[ f_i(\alpha_0 v) + f_i(\alpha_1 A_{j,i} b_i f_j(v))+f_i(\alpha_0 B_{j,i} f_j(v))=\alpha_1 v +\alpha_0 A_{j,i} f_j(v)+\alpha_1 A_{j,i} a_i f_j(v)+\alpha_1 B_{j,i} f_j(v). \]
The assertion then follows from the fact that the minimum distance of $C$ is $n-2$ and noting that the above polynomial is the zero polynomial if and only if $W_{f_i,\eta_i}=\alpha W_{f_j,\eta_j}$, a contradiction to the fact that $i\ne j$.
If $i=j$, the result can be obtained by repeating the previous argument (see also \cite[Corollary 4.7]{NPSZ2021}) noting that the the condition \eqref{eq:conditions} gives rise to the following system
\[ 
\left\{
\begin{array}{ll}
u=\alpha_0 v +\alpha_1 b_i f_i(v),\\
f_i(u)=\alpha_1 v +\alpha_0  f_i(v)+\alpha_1 a_i f_i(v).
\end{array}
\right.
\]
\end{proof}

\begin{remark}
Clearly, since 
\[\dim_{\fq}(W_{f_i,\eta_i}\cap \alpha W_{f_j,\eta_j})=\dim_{\fq}(\alpha^{-1}W_{f_i,\eta_i}\cap W_{f_j,\eta_j})\] 
for $\alpha \in \fqn^*$, we could just assume that in Theorem \ref{th:canonicalform} the condition on the polynomials $F_{i,j}(x)$ hold for every $i,j$ with $i\leq j$. 
\end{remark}

In particular, the canonical form for a Sidon space in $\fqn$ of dimension $n/2$ is the following.

\begin{corollary}
Let $n=2k\geq 4$ and let $U$ be an $\fq$-subspace of $\fqn$ with dimension $k$. Then $U$ is equivalent to $W_{f,\eta}=\{x+\eta f(x) \colon x \in \F_{q^k}\}$,
where $f(x) \in \mathcal{L}_{k}$ and $\eta \in \fqn\setminus\F_{q^k}$.
Let $\eta^2=a\eta+b$ with $a,b \in \F_{q^k}$.
Then, $U$ is a Sidon space if and only if for every $\alpha_0,\alpha_1 \in \F_{q^k}$ with $(\alpha_0,\alpha_1)\ne(0,0)$ the following linearized polynomial in $\mathcal{L}_{k}$
\[ F(x)=f(\alpha_0 x) + f(\alpha_1 b f(x))-\alpha_1 x -\alpha_0  f(x)-\alpha_1  a f(x) \]
is either the zero polynomial or it has at most $q$ roots over $\F_{q^k}$.
\end{corollary}

Now we show some examples of multi-orbit cyclic subspace codes with minimum distance $n-2$, making use of Theorem \ref{th:canonicalform}.

\begin{theorem}\label{prop:examplepseudo}
Let $s$ be a positive integer coprime with $k\geq 2$ and $n=2k$, let $\xi \in \fqn\setminus \F_{q^k}$ and let $f_i(x)=\mu_i x^{q^s} \in \mathcal{L}_{k}$ for $i \in \{1,\ldots,r\}$ such that $r\leq q-1$,  $\N_{q^k/q}(\mu_i)\ne\N_{q^k/q}(\mu_j)$ and $\N_{q^k/q}(\mu_i \mu_j \xi^{q^k+1})\ne 1$ for every $i\ne j$.
Then 
\[C=\bigcup_{i=1}^rC_{W_{f_i,\xi}}\] 
is a cyclic subspace code of size $r\frac{q^n-1}{q-1}$ and minimum distance $n-2$.
\end{theorem}
\begin{proof}
Let $x^2-ax-b$ be the minimal polynomial of $\xi$ over $\F_{q^k}$. 
Clearly, $b=-\xi^{q^k+1}$.
Let $i,j \in \{1,\ldots,r\}$, then the polynomials $F_{i,j}(x)$ of Theorem \ref{th:canonicalform} read as follows
\[ F_{i,j}(x)=\mu_i\alpha_0^{q^s}x^{q^s}+\mu_i \alpha_1^{q^s}b^{q^s} \mu_j^{q^s} x^{q^{2s}}-\alpha_1 x-\alpha_0 \mu_j x^{q^s}-\alpha_1  a\mu_j x^{q^s},\]
since the $\eta_i$'s in Theorem \ref{th:canonicalform} are chosen to be equal to $\xi$ and hence $A_{j,i}=1$ and $B_{j,i}=0$, for every $i$ and $j$.
If $F_{i,j}(x)$ is not the zero polynomial, then it can be seen as $q^s$-polynomial with $q^s$-degree at most two. 
If the coefficient of $x^{q^{2s}}$ is zero, then by Theorem \ref{Gow} $\dim_{\fq}(\ker(F_{i,j}(x)))\leq 1$.
If the coefficient of $x^{q^{2s}}$ is non-zero and $F_{i,j}(x)$ admits $q^2$ roots, Theorem \ref{Gow} implies that
\[ \N_{q^k/q}(\mu_i \alpha_1^{q^s} \mu_j^{q^s} b^{q^s})=\N_{q^k/q}(-\alpha_1), \]
a contradiction to the fact that $\N_{q^k/q}(\mu_i \mu_j \xi^{q^k+1})\ne 1$ for every $i$ and $j$.
So, by Theorem \ref{th:canonicalform} the assertion follows.
\end{proof}

\begin{remark}
Note that in the above theorem $r$ cannot reach $q-1$ when $q\geq 4$.
Indeed, suppose that $r=q-1$, then
\[ \{ \N_{q^k/q}(\mu_i) \colon i \in \{1,\ldots,r\} \}=\mathbb{F}_q^*, \]
and so when $q\geq 4$
\[ \{ \N_{q^k/q}(\mu_i\mu_j) \colon i,j \in \{1,\ldots,r\}, i \ne j \}=\mathbb{F}_q^*. \]
Therefore,
\[ \{ \N_{q^k/q}(\mu_i\mu_j) \N_{q^k/q}(\xi^{q^k+1}) \colon i,j \in \{1,\ldots,r\}, i \ne j \}=\mathbb{F}_q^*, \]
and hence there exist $i,j \in \{1,\ldots,r\}$ with $i\ne j$ such that $ \N_{q^k/q}(\mu_i\mu_j) \N_{q^k/q}(\xi^{q^k+1})=1$.
If $q=3$, $r=2$ can be reached. Indeed, it is enough to take $\mu_1,\mu_2 \in \mathbb{F}_{q^k}$ and $\xi \in \mathbb{F}_{q^{n}}\setminus\mathbb{F}_{q^k}$ such that $\N_{q^k/q}(\mu_1)=1,  \N_{q^k/q}(\mu_2)=-1$ and $\N_{q^k/q}(\xi^{q^k+1})=1$.
\end{remark}

The first construction of cyclic codes with these parameters was given in \cite{Roth}.
In the above result we have extended it to the case $s>1$.
As we will see later, this yields to a much larger class of new codes.
Choosing the $\gamma_i$'s as in the paper \cite{Roth}, we will extend \cite[Lemma 38]{Roth} as follows.

%First we denote by $\mathcal{W}_{q^s-1}=\{y^{q^s-1}\colon y \in \F_{q^k}\}$ and $\overline{\mathcal{W}}_{q^s-1}=\F_{q^k}\setminus \mathcal{W}_{q^s-1}$.

\begin{corollary} \label{teo:subspacecodesidon}
For a prime power $q \geq 3$ and a positive integer $k\geq 2$, let $w$ be a primitive element of $\F_{q^k}$ and let $s$ be a positive integer such that $\gcd(s,k)=1$. Let $b \in \F_{q^k}$ be such that the polynomial $p(x)=x^2+bx+w$ is irreducible over $\F_{q^k}$ (such $b$ always exist). For $n=2k$, let $\gamma_0\in \F_{q^n}$ be a root of $p(x)$. For $i \in \{0,1,\ldots,\tau-1\}$, where $\tau=\lfloor(q-1)/2 \rfloor$, let $\gamma_i=w^i\gamma_0$ and let 
$$V_i=\{u+u^{q^s} \gamma_i:u \in \F_{q^k}\}.$$
The set 
\[G_{n,s}=\bigcup_{i \in \{0,1,\ldots,\tau-1\}} C_{V_i}\subseteq \mathcal{G}_q(n,k)\] 
is a subspace code of size $\tau \cdot (q^n-1)/(q-1)$ and minimum distance $n-2$.
\end{corollary}
\begin{proof}
Let $\mu_i=w^i$ for any $i$ and $\gamma_0=\xi$.
Note that $\gamma_0^{q^k+1}=\xi^{q^k+1}=w$.
Let $i,j \in \{0,\ldots,\tau-1\}$. Note that $w^i$ and $w^j$ are such that $\N_{q^k/q}(w^i)\ne \N_{q^k/q}(w^j)$, since $w$ is a primitive element and $|i-j|<q-1$.
Moreover, 
\[ \N_{q^k/q}(\mu_i\mu_j\xi^{q^k+1})=\N_{q^k/q}(w)^{i+j+1}, \]
which cannot be one, again since $w$ is primitive and $i+j+1<q-1$.
So, we can apply Theorem \ref{prop:examplepseudo} to get the assertion.
\end{proof}

Adding to the list of $V_i$'s also the subfield $\F_{q^k}$ extends the code $G_{n,s}$ preserving the minimum distance but keeping a large number of codewords.

\begin{proposition}\label{prop:subfield}
For a prime power $q \geq 3$ and a positive integer $k\geq 2$, let $w$ be a primitive element of $\F_{q^k}$ and let $s$ be a positive integer such that $\gcd(s,k)=1$. Let $b \in \F_{q^k}$ be such that the polynomial $p(x)=x^2+bx+w$ is irreducible over $\F_{q^k}$ (such $b$ always exist). For $n=2k$, let $\gamma_0\in \F_{q^n}$ be a root of $p(x)$. For $i \in \{0,1,\ldots,\tau-1\}$, where $\tau=\lfloor(q-1)/2 \rfloor$, let $\gamma_i=w^i\gamma_0$ and let 
$$V_i=\{u+u^{q^s} \gamma_i:u \in \F_{q^k}\}.$$
Then 
\[ \dim_{\fq}(\F_{q^k}\cap \alpha V_i)\leq 1 \]
for every $i\in\{0,\ldots,\tau-1\}$.
In particular, the set
\[ \overline{G}_{n,s}= \bigcup_{i \in \{0,1,\ldots,\tau-1\}} C_{V_i} \cup C_{\F_{q^k}} \subseteq \mathcal{G}_q(n,k)\] 
is a subspace code of size $\tau \cdot (q^n-1)/(q-1)+q^k+1$ with minimum distance $n-2$.
\end{proposition}
\begin{proof}
Clearly, $C$ is a cyclic code and its size is $\tau \cdot (q^n-1)/(q-1)+q^k+1$ because of Theorem \ref{th:orbitsize} and Corollary \ref{teo:subspacecodesidon}.
Let $\alpha \in \fqn^*$.
We now first show that
\[ \dim_{\fq}(\F_{q^k}\cap \alpha V_i)\leq 1. \]
Then there exist $\alpha_0,\alpha_1 \in \F_{q^k}$ such that $\alpha=\alpha_0+\alpha_1 \gamma_0$. 
Let $v \in \F_{q^k}$, we look for solutions in $u \in \F_{q^k}$ such that
\[ v=\alpha(u+u^{q^s}\gamma_i). \]
Using that $\gamma_0^2=a\gamma_0+b$, for some $a,b \in \F_{q^k}$, and that $\alpha=\alpha_0+\alpha_1 \gamma_0$, the above relation reads as follows
\[ 
\left\{
\begin{array}{ll}
\alpha_0 u +b \alpha_1 u^{q^s} w^i=v,\\
\alpha_1 u+\alpha_0 w^i u^{q^s} +w^i\alpha_1 a u^{q^s}=0.
\end{array}
\right.
\]
Since $\gcd(s,k)=1$, $\alpha_1 u+\alpha_0 w^i u^{q^s} +w^i\alpha_1 a u^{q^s}$ can be seen as a non-zero $q^s$-polynomial in $u$ over $\F_{q^k}$ of $q^s$-degree at most one. Therefore, by Theorem \ref{Gow} it follows that the number of $u \in \F_{q^k}$ which are solutions of the above system is at most $q$ and hence 
\[ \dim_{\fq}(\F_{q^k}\cap \alpha V_i)\leq 1. \]

Now, note that for any $\alpha \notin \F_{q^k}$ we have
\[ \dim_{\fq}(\F_{q^k}\cap \alpha \F_{q^k})=0. \]
%Indeed, let $\alpha=\alpha_0+\alpha_1 \gamma_0$ with $\alpha_0,\alpha_1 \in \F_{q^k}$. Clearly, $\alpha_1$ cannot be zero and if $u,v \in \F_{q^k}$ such that
%\[ v=\alpha u=u\alpha_0+\alpha_1 u \gamma_0, \]
%we have $u=v=0$.
By Corollary \ref{teo:subspacecodesidon}, we have that
\[ \dim_{\fq}(V_i\cap \alpha V_j)\leq 1, \]
for each $i,j \in \{0,\ldots,\tau-1\}$.
So, the minimum distance of $C$ is $n-2$.
\end{proof}

\section{Equivalences of multi-orbit cyclic subspace codes}

In \cite{Trautmann2} Horlemann-Trautmann initiated the study of the equivalence for subspace codes in a very general setting. More recently in \cite{Heideequiv} Gluesing-Luerssen and Lehmann investigate the case of cyclic orbit codes, that is $G$-orbits of a subspace $U$ of $\F_{q^n}$ with $G$ a Singer cycle of $\mathrm{GL}(n,q)$.
Therefore, motivated by \cite[Theorem 2.4]{Heideequiv} and according to \cite[Definition 3.5]{Heideequiv}, we say that two cyclic subspace codes $C_U$ and $C_V$ are \textbf{linearly equivalent} if there exists $i \in \{0,\ldots,n-1\}$ such that 
\[ C_U=C_{ V^{q^i}}, \]
where $V^{q^i}=\{ v^{q^i} \colon v \in V \}$.
This happens if and only if $U=\alpha V^{q^i}$, for some $\alpha \in \fqn^*$.

We can hence extend this definition to the case of cyclic subspace codes with $r$ orbits.

\begin{definition}
Let $C=\bigcup_{i \in \{1,\ldots,r\}} C_{U_i}$ and $C'=\bigcup_{i \in \{1,\ldots,r\}} C_{V_i}$, with $U_1,\ldots,U_r$ and $V_1,\ldots,V_r$ $\fq$-subspaces in $\mathcal{G}_q(n,k)$.
We say that $C$ and $C'$ are \textbf{linearly equivalent} if there exists $j \in \{0,\ldots,n-1\}$ such that
\[ C=\bigcup_{i \in \{1,\ldots,r\}} C_{V_i^{q^j}}. \]
\end{definition}

Equivalently, $C=\bigcup_{i \in \{1,\ldots,r\}} C_{U_i}$ and $C'=\bigcup_{i \in \{1,\ldots,r\}} C_{V_i}$ are linearly equivalent if there exist $\alpha_1,\ldots,\alpha_r \in \fqn^*$, $\sigma \in S_r$ and $j \in \{0,\ldots,n-1\}$ such that
\[ U_i=\alpha_i V_{\sigma(i)}^{q^j}, \]
for any $i \in \{ 1,\ldots,r \}$.

Clearly, if $C$ and $C'$ are two linearly equivalent cyclic subspace codes, then $|C|=|C'|$ and their minimum distances coincide. 
Such properties are also satisfied if we weaken the definition as follows.

\begin{definition}
Let $C=\bigcup_{i \in \{1,\ldots,r\}} C_{U_i}$ and $C'=\bigcup_{i \in \{1,\ldots,r\}} C_{V_i}$, with $U_1,\ldots,U_r$ and $V_1,\ldots,V_r$ $\fq$-subspaces in $\mathcal{G}_q(n,k)$.
We say that $C$ and $C'$ are \textbf{semilinearly equivalent} if there exists $\rho \in \mathrm{Aut}(\fqn)$ such that
\[ C=\bigcup_{i \in \{1,\ldots,r\}} C_{V_i^{\rho}}. \]
\end{definition}

Equivalently, $C=\bigcup_{i \in \{1,\ldots,r\}} C_{U_i}$ and $C'=\bigcup_{i \in \{1,\ldots,r\}} C_{V_i}$ are semilinearly equivalent if there exist $\alpha_1,\ldots,\alpha_r \in \fqn^*$, $\sigma \in S_r$ such that
\[ U_i=\alpha_i V_{\sigma(i)}^{\rho}, \]
for any $i \in \{ 1,\ldots,r \}$.

Now, we will study the equivalence issue among the family of codes $G_{n,s}$, showing that such a family contains a large number of inequivalent codes.
To this aim, we start by recalling the following technical lemma proved in \cite{NPSZ2021}.

\begin{lemma}\cite[Lemma 4.13]{NPSZ2021}\label{lem:equivbet2}
Let $n=2k$ be an even positive integer and $k\geq 3$.
Let $\xi,\eta \in \F_{q^{2k}}\setminus \F_{q^k}$.
Let $s,s'$ be positive integers such that $\gcd(s,k)=\gcd(s',k)=1$.
Let $\mu,\overline{\mu}\in \F_{q^k}^*$ and consider the $\fq$-subspaces of $\F_{q^{2k}}$
\[ W_{\mu x^{q^s},\xi}=\{ u+\xi \mu u^{q^s} \colon u \in \F_{q^k} \} \]
and
\[ W_{\overline{\mu} x^{q^{s'}},\eta}=\{ v+\eta \overline{\mu} v^{q^{s'}} \colon v \in \F_{q^k} \}. \]
There exists $\lambda \in \fqn^*$ such that
\begin{equation}\label{eq:equivalencemonomials}
    W_{\mu x^{q^s},\xi}=\lambda (W_{\overline{\mu} x^{q^{s'}},\eta})^\rho
\end{equation}
if and only if one of the following condition holds
\begin{itemize}
    \item $s\equiv s'\pmod{k}$, $B=0$, $\xi =\frac{\eta^{\rho}}{A}$ and $\overline{\mu}=\frac{\mu^{\rho^{-1}}c}{A^{\rho^{-1}}}$, where $c\in \F_{q^k}$ is such that $\N_{q^k/q}(c)=1$;
    \item $s\equiv -s'\pmod{k}$, $\xi=\frac{\eta^\rho+Aa}{A}$, $\overline{\mu}=\frac{1}{c \mu^{q^{-s}\rho^{-1}} A^{\rho^{-1}}b^{\rho^{-1}}}$ and $B=-Aa$, where $c\in \F_{q^k}$ is such that $\N_{q^k/q}(c)=1$,
\end{itemize}
where $\xi^2=a\xi+b$, $\eta^\rho=A\xi+B$ with $a,b,A,B \in \F_{q^k}$ and $\rho \in \mathrm{Aut}(\fqn)$.
\end{lemma}

Using the above result, similarly to what has been done in \cite[Theorem 4.14]{NPSZ2021}, we obtain the following result.

\begin{corollary}\label{cor:equivstronSidonpseudo}
Let $s,s'$ be two positive integers coprime with $k\geq 3$ and let $n=2k$, let $\xi,\eta \in \fqn\setminus \F_{q^k}$ and let $f_i(x)=\mu_i x^{q^s} \in \mathcal{L}_{k}$ and $g_i(x)=\overline{\mu}_i x^{q^{s'}} \in \mathcal{L}_{k}$ for $i \in \{1,\ldots,r\}$ such that $r\leq q-1$, $\N_{q^k/q}(\mu_i)\ne\N_{q^k/q}(\mu_j)$, $\N_{q^k/q}(\overline{\mu}_i)\ne\N_{q^k/q}(\overline{\mu}_j)$, $\N_{q^k/q}(\mu_i \mu_j \xi^{q^k+1})\ne 1$ and $\N_{q^k/q}(\overline{\mu}_i \overline{\mu}_j \eta^{q^k+1})\ne 1$ for every $i\ne j$.
Then $\displaystyle C=\bigcup_{i=1}^rC_{W_{f_i,\xi}}$ and $\displaystyle \overline{C}=\bigcup_{i=1}^rC_{W_{g_i,\eta}}$ are semilinearly equivalent if and only if there exist a permutation $\sigma \in S_r$ and $\rho \in \mathrm{Aut}(\fqn)$ such that, for every $i \in \{1,\ldots,r\}$, one of the following occur
\begin{itemize}
    \item $s \equiv s' \pmod{k}$, $B=0$, $\xi =\frac{\eta^{\rho}}{A}$ and $\overline{\mu}_{\sigma(i)}=\frac{\mu_i^{\rho^{-1}}c}{A^{\rho^{-1}}}$, where $c\in \F_{q^k}$ is such that $\N_{q^k/q}(c)=1$;
    \item $s \equiv -s' \pmod{k}$, $\xi=\frac{\eta^\rho+Aa}{A}$, $\overline{\mu}_{\sigma(i)}=\frac{1}{c \mu_i^{q^{-s}\rho^{-1}} A^{\rho^{-1}}b^{\rho^{-1}}}$ and $B=-Aa$, where $c\in \F_{q^k}$ is such that $\N_{q^k/q}(c)=1$,
\end{itemize}
where $\xi^2=a\xi+b$ and $\eta^\rho=A\xi+B$ with $a,b,A,B \in \F_{q^k}$.
\end{corollary}
\begin{proof}
Suppose that $C$ and $\overline{C}$ are semilinearly equivalent, that is there exist a permutation $\sigma \in S_r$, $\lambda_1,\ldots,\lambda_r \in \F_{q^n}^*$ and $\rho \in \mathrm{Aut}(\fqn)$ such that
\[ W_{f_i,\xi}=\lambda_i (W_{g_{\sigma(i)},\eta})^\rho \]
for every $i \in \{1,\ldots,r\}$.
By Lemma \ref{lem:equivbet2} then $s \equiv \pm s' \pmod{k}$.
Moreover, if $s \equiv s' \pmod{k}$ then $B=0$, $\xi =\frac{\eta^{\rho}}{A}$ and $\overline{\mu}_{\sigma(i)}=\frac{\mu_i^{\rho^{-1}}c}{A^{\rho^{-1}}}$, where $c\in \F_{q^k}$ such that $\N_{q^k/q}(c)=1$; whereas if $s \equiv -s' \pmod{k}$ then $\xi=\frac{\eta^\rho+Aa}{A}$, $\overline{\mu}_{\sigma(i)}=\frac{1}{c \mu_i^{q^{-s}\rho^{-1}} A^{\rho^{-1}}b^{\rho^{-1}}}$ and $B=-Aa$, where $c\in \F_{q^k}$ such that $\N_{q^k/q}(c)=1$, for every $i \in \{1,\ldots,r\}$.
The converse follows by Lemma \ref{lem:equivbet2}.
\end{proof}

In particular, we can give the following bound on the number of inequivalent codes of type $G_{n,s}$.

\begin{corollary}\label{cor:number}
The number of semilinearly inequivalent codes of Corollary \ref{teo:subspacecodesidon} is at least $\varphi(k)/2$, where $\varphi$ is the Euler totient function.
\end{corollary}

To prove the above corollary, we did not use the complete characterization of the equivalence given by Corollary \ref{cor:equivstronSidonpseudo}. This means that the family of the codes of form $G_{n,s}$ could be even larger.

\section{A geometric interpretation of cyclic subspace codes}

In this section we present a geometric interpretation of the Sidon property via linear sets that will give a bound on the number of orbits of a multi-orbit cyclic subspace code. 

The Sidon property in terms of linear sets reads as follows.

\begin{theorem}\label{th:characSidonLS}
Let $U$ be a $k$-dimensional $\fq$-subspace of $\fqn$. Then
$U$ is a Sidon space if and only if the only points of $L_{U\times U}\subseteq \PG(1,q^n)=\PG(\fqn\times\fqn,\fqn)$ of weight greater than one are those in $\PG(\F_q\times\fq,\fq)$. 
Furthermore, the weight of such points is $k$.

In particular, if $U$ is a Sidon space then the size of $L_{U\times U}$ is 
\[ \frac{q^k-1}{q-1}(q^k-q)+q+1. \]
\end{theorem}
\begin{proof}
Let $\alpha\in \fqn^*$.
Let $\la (1,\alpha) \ra_{\fqn}\in L_{U\times U}$. 
Then there exists some $\rho \in \fqn^*$ such that
\[ \rho(1,\alpha)\in U\times U, \]
that is $\rho \in U \cap \alpha^{-1} U$. 
Therefore, if $U$ is a Sidon space by Theorem \ref{lem:charSidon2} it follows that $\dim_{\fq}(U\cap\alpha^{-1}U)\leq 1$ if $\alpha \notin \fq$ and $\dim_{\fq}(U\cap\alpha^{-1}U)=k$ if $\alpha \in \fq$. So $w_{L_{U\times U}}(\la (1,\alpha) \ra_{\fqn})=1$ if and only if $\alpha \notin \fq$ and if $w_{L_{U\times U}}(\la (1,\alpha) \ra_{\fqn})\geq 2$ then $\alpha \in \fq$ and $w_{L_{U\times U}}(\la (1,\alpha) \ra_{\fqn})=k$.
Suppose now that the only points of $L_{U\times U}\subseteq \PG(1,q^n)$ of weight greater than one are those in $L_{U\times U}\cap \PG(\F_q\times\fq,\fq)$. Then if $\alpha \notin \fq$ we have $\dim_{\fq}(U\cap\alpha^{-1}U)\leq 1$ and so by Theorem \ref{lem:charSidon2} the subspace $U$ turns out to be a Sidon space.
The last part follows by \eqref{eq:pesicard} and \eqref{eq:pesivett}.
\end{proof}

The above result can be extended to the case of multi-Sidon spaces and, when they all have dimension $n/2$, to those cyclic subspace codes with minimum distance $n-2$.

\begin{theorem}\label{th:characmultipleSLS}
Let $\{U_1,\ldots,U_r\}$ be a set of $\fq$-subspaces in $\fqn$ and let $k_i=\dim_{\fq}(U_i)\geq 2$ for every $i \in \{1,\ldots,r\}$. Then
$\{U_1,\ldots,U_r\}$ is a multi-Sidon space if and only if 
\begin{itemize}
    \item the only points of $L_{U_i\times U_i}\subseteq \PG(1,q^n)=\PG(\fqn\times\fqn,\fqn)$ of weight greater than one are those in $ \PG(\F_q\times\fq,\fq)$, for every $i \in \{1,\ldots,r\}$;
    \item $L_{U_i\times U_i}\cap L_{U_j\times U_j}=\PG(\fq\times\fq,\fq)$, for every $i,j \in \{1,\ldots,r\}$ with $i\ne j$.
\end{itemize}
\end{theorem}
\begin{proof}
Let $\alpha\in \fqn^*$.
Let $\la (1,\alpha) \ra_{\fqn}\in L_{U_i\times U_i}\cap L_{U_j\times U_j}$, with $i\ne j$.
In particular, there exists $\rho \in \fqn^*$ such that 
\[ \rho(1,\alpha)=(u,\overline{u}), \]
for some $u,\overline{u}\in U_i$.
Hence, $\alpha=\rho^{-1}\overline{u}=\overline{u}/u$.
Similarly, $\alpha=\overline{v}/v$, for some $v,\overline{v}\in U_j$.
So that $\alpha \in U_i\cdot U_i^{-1} \cap U_j\cdot U_j^{-1}$.
The assertion now follows by Theorem \ref{th:character} and by Theorem \ref{th:characSidonLS}.
\end{proof}

We can hence derive some bounds that involve the number and the dimensions of the subspaces of a multi-Sidon space, the degree of the field extension and $q$.

\begin{theorem}\label{th:boundmulti}
Let $\{U_1,\ldots,U_r\}$ be a multi-Sidon space and let $k_i=\dim_{\fq}(U_i)\geq 2$ for every $i \in \{1,\ldots,r\}$.
Then
\[ \sum_{i=1}^r \frac{q^{k_i}-1}{q-1}(q^{k_i}-q)\leq q^n-q. \]
If $\{U_1,\ldots,U_r\}$ is a maximum multi-Sidon space then 
\[ r\leq \frac{(q^n-q)(q-1)}{(q^{n/2}-q)(q^{n/2}-1)}. \]
In particular, \[ r \leq  \begin{cases}
q-1 & \text{if } n>4,\\
q & \text{if } n=4.
\end{cases}\]
\end{theorem}
\begin{proof}
By Theorem \ref{th:characmultipleSLS}, the $L_{U_i\times U_i}$'s pairwise intersect each other only in $\PG(\fq\times\fq,\fq)$,
so that
\[ \left|\left(\bigcup_{i=1}^r L_{U_i\times U_i}\right)\setminus \PG(\fq\times\fq,\fq)\right|=\sum_{i=1}^r \frac{q^{k_i}-1}{q-1}(q^{k_i}-q).\]
Since $\left(\bigcup_{i=1}^r L_{U_i\times U_i}\right)\setminus \PG(\fq\times\fq,\fq) \subseteq \PG(1,q^n)\setminus \PG(\fq\times\fq,\fq)$, the assertion follows.
\end{proof}

Combining Theorem \ref{th:boundmulti} with the definition of multi-Sidon space we obtain the following bound on the number of orbits of a cyclic subspace code (whose subspaces have dimension $n/2$).

\begin{corollary}\label{cor:boundcyclcisubspacecodes}
Let $n=2k\geq 4$ and let $U_1,\ldots,U_r$ be $\fq$-subspaces of dimension $k$ in  $\fqn$ and let
\[ C=\bigcup_{i \in \{1,\ldots,r\}} C_{U_i} \subseteq \mathcal{G}_{q}(n,k)\]
be a subspace code. 
If the minimum distance of $C$ is $n-2$, then  
\[ r \leq  \begin{cases}
q-1 & \text{if } n>4,\\
q & \text{if } n=4.
\end{cases}\]
\end{corollary}

\begin{remark}
Corollary \ref{cor:boundcyclcisubspacecodes} implies that when $n=2k>4$ and $U_1,\ldots,U_r$ are $\fq$-subspaces of dimension $k$ in  $\fqn$ and
\[ C=\bigcup_{i \in \{1,\ldots,r\}} C_{U_i} \subseteq \mathcal{G}_{q}(n,k),\]
with $C$ of minimum distance $n-2$, then $|C|\leq q^n-1$. 
This bound cannot be derived directly from the sphere-packaging bound for subspace codes, but we note that they coincide asymptotically. 
\end{remark}

\section{Linear sets with only $r$ points of weight greater than one}

In this section we will make use of cyclic subspace codes to construct a special type of linear sets in $\PG(r-1,q^n)$ with exactly $r$ points of weight greater than one. 
More precisely, our purpose is to investigate linear sets $L_U$ in $\PG(r-1,q^n)$ of rank $k\leq (r-1)n$ containing $r$ independent points $P_1,\ldots,P_r$ such that
\[ w_{L_U}(P_1)+\ldots+w_{L_U}(P_r)=k. \]
We will characterize these linear sets, we will provide bounds on their rank and constructions, using multi-orbit cyclic subspace codes via their representatives.
Then we will give a polynomial representation of those linear sets having rank $n$.
Then we will study the dual of these linear sets, which have few intersection numbers with hyperplanes, yielding linear rank metric codes with few weights.

Linear sets of this form in $\PG(1,q^n)$ have been already studied in \cite{NPSZ2021} (see also \cite{JenaVdV,NPSZ2022}), where the authors study linear sets on the projective line containing two points whose sum of their weights equals the rank of the linear set.

We start our investigation by proving that, up to equivalence, one can write the linear sets as the one defined by the Cartesian product of $r$ $\fq$-subspaces of $\fqn$, as shown in the next result.

\begin{proposition}\label{prop:structure}
Let $L_W$ be an $\fq$-linear set in $\mathrm{PG}(r-1,q^n)$ of rank $k$ containing $r$ independent points $P_1=\langle \mathbf{v}_1\rangle_{\fqn},\ldots,P_r=\langle \mathbf{v}_r\rangle_{\fqn}$ such that
\[ w_{L_W}(P_1)+\ldots+w_{L_W}(P_r)=k\leq (r-1)n. \] 
Then $L_W$ is $\mathrm{PGL}(r,q^n)$-equivalent to $L_U$ where 
$U=U_1\times \ldots \times U_r,$
for some $\fq$-subspaces $U_1,\ldots,U_r$ of $\fqn$ such that $\dim_{\fq}(U_i)=w_{L_W}(P_i)$ for every $i \in \{1,\ldots,r\}$.
Moreover, if $k\leq n$ then $U_1,\ldots,U_r$ can be chosen in such a way that $U_1+\ldots+U_r$ is a direct sum.
\end{proposition}
\begin{proof}
Consider a collineation $\varphi \in \mathrm{PGL}(r,q^n)$ such that
\[ \varphi(P_i)=\langle \mathbf{e}_i \rangle_{\fqn}, \]
$\mathbf{e}_i\in \fqn^r$ is the vector whose $i$-th component is $1$ and all the others are zero.
Denote by $f$ the associated linear isomorphism of $\fqn^r$ and let $U=f(W)$. 
Since 
\[ W= (W\cap \langle \mathbf{v}_1\rangle_{\fqn})\oplus \ldots \oplus (W\cap \langle \mathbf{v}_r\rangle_{\fqn}), \]
then
\[ U= (U\cap \langle \mathbf{e}_1\rangle_{\fqn})\oplus \ldots \oplus (U\cap \langle \mathbf{e}_r\rangle_{\fqn}). \]
Therefore, $U\cap \langle \mathbf{e}_i\rangle_{\fqn}$ is equal to $\{(0,\ldots,0,u,0,\ldots,0)\colon u \in U_i\}$ for any $i \in \{1,\ldots,r\}$ and for a subspace $U_i$ of $\fqn$. So we obtain the first part of the assertion.
Now suppose that $k\leq n$. Our aim is to prove that there exists $(\lambda_1,\ldots,\lambda_r)\in (\fqn^*)^r$ such that $\lambda_i U_i\cap \langle \lambda_j U_j \colon j \in\{1,\ldots,r\}\setminus\{i\} \rangle_{\fq}=\{\mathbf{0}\}$ for any $i\in \{1,\ldots,r\}$.
We proceed by finite induction proving that there exist $\lambda_i \in \fqn^*$ such that
\begin{equation}\label{eq:prove}
\lambda_i U_i\cap \langle \lambda_j U_j \colon j \in\{1,\ldots,i-1\} \rangle_{\fq}=\{\mathbf{0}\},
\end{equation}
for every $i \in \{2,\ldots,r\}$.
This is enough to show the last part of the assertion. The case $i=2$ has been proved in \cite[Proposition 3.2]{NPSZ2021}. 
Now, let $i\in \{1,\ldots,r\}$ and suppose that \eqref{eq:prove} holds for every $j\leq i$. 
Then, using \eqref{eq:prove} recursively we get
\[ \dim_{\fq}(\langle \lambda_j U_j \colon j \in \{1,\ldots,i\} \rangle_{\fq})= \dim_{\fq} (\lambda_i U_i)+\dim_{\fq}(\langle \lambda_j U_j \colon j \in \{1,\ldots,i-1\} \rangle_{\fq})\]
\[-\dim_{\fq}(\lambda_i U_i \cap \langle \lambda_j U_j \colon j \in \{1,\ldots,i-1\} \rangle_{\fq})=\dim_{\fq}(U_1)+\ldots+\dim_{\fq}(U_i). \]
Let $h \in \{1,\ldots,i\}$, then
\[ \dim_{\fq}(\lambda_h U_h \cap \langle \lambda_j U_j \colon j \in\{1,\ldots,i\}\setminus\{h\} \rangle_{\fq})=\dim_{\fq}(U_h)\]
\[+\dim_{\fq}(\langle \lambda_j U_j \colon j \in\{1,\ldots,i\}\setminus\{h\} \rangle_{\fq})-\dim_{\fq}(\langle \lambda_j U_j \colon j \in\{1,\ldots,i\}\rangle_{\fq}). \]
The last two equations imply $\dim_{\fq}(\lambda_h U_h \cap \langle \lambda_j U_j \colon j \in\{1,\ldots,i\}\setminus\{h\}\rangle_{\fq})=0$.

We now prove \eqref{eq:prove}.
Suppose now that 
\[\lambda_i U_i\cap \langle \lambda_j U_j \colon j \in\{1,\ldots,i-1\} \rangle_{\fq}=\{\mathbf{0}\}.\]
Suppose that for each $\lambda \in \fqn^*$
\begin{equation}\label{eq:linsetline} \lambda U_{i+1} \cap \langle \lambda_j U_j \colon j \in\{1,\ldots,i\} \rangle_{\fq}\ne \{\mathbf{0}\}.
\end{equation}
Denote by $\overline{U}=\langle \lambda_j U_j \colon j \in\{1,\ldots,i\} \rangle_{\fq}$. By \eqref{eq:linsetline} it follows that the $\fq$-linear set $L_{U_{i+1}\times \overline{U}}$ of rank less than or equal to $n$ contains at least $q^n+1$ points, a contradiction to \eqref{eq:card}. So, there exists $\lambda_{i+1} \in \fqn^*$ such that $\lambda_{i+1}U_{i+1}\cap \overline{U}=\{\mathbf{0}\}$.
\end{proof}

In terms of linear sets, Theorem \ref{th:character} reads as a characterization of linear sets with only $r$ points of weight greater than one (in independent position).

\begin{theorem}\label{th:SS-1}
Let $U_1,\ldots,U_r$ be $r$ $\fq$-subspaces of $\fqn$ such that $\sum_{i=1}^r \dim_{\fq}(U_i)=k\leq (r-1)n$.
Let $k_i=\dim_{\fq}(U_i)\geq 2$,  for any $i \in \{1,\ldots,r\}$ and $U=U_1\times \ldots \times U_r$.
Then, the following are equivalent
\begin{itemize}
    \item[i)] all the points of $L_U$ different from $P_i=\langle\mathbf{e}_i\rangle_{\fqn}$ with $i\in\{1,\ldots,r\}$ have weight one in $L_U$;
    \item[ii)] $U_i\cdot U_i^{-1} \cap U_j\cdot U_j^{-1}=\fq$ for any $i,j \in \{1,\ldots,r\}$ with $i\ne j$;
    \item[iii)] $\dim_{\fq}(U_i\cap \alpha U_j)\leq 1$, for every $\alpha \in \fqn^*$ and $i,j \in \{1,\ldots,r\}$ with $i\ne j$.
\end{itemize}
\end{theorem}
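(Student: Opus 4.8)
The plan is to observe that Theorem~\ref{th:SS-1} is essentially a translation of Theorem~\ref{th:character} into the language of linear sets, so most of the work has already been done; I only need to reinterpret condition i) of Theorem~\ref{th:character} as the geometric statement i) here. First I would note that the equivalence of ii) and iii) is verbatim the equivalence of ii) and iii) in Theorem~\ref{th:character}, so nothing new is needed there. The only real task is to prove i)\,$\Leftrightarrow$\,i) of Theorem~\ref{th:character}, i.e.\ that ``every point of $L_U$ other than the $P_i$ has weight one'' is equivalent to conditions a) and b) of Theorem~\ref{th:character}.

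For that equivalence I would argue as follows. Write $U = U_1 \times \cdots \times U_r$, $k_i = \dim_{\fq}(U_i) \ge 2$, and recall that for a point $\langle \mathbf{v}\rangle_{\fqn}$ of $\PG(r-1,q^n)$ its weight in $L_U$ is $w_{L_U}(\langle\mathbf{v}\rangle_{\fqn}) = \dim_{\fq}(U \cap \langle\mathbf{v}\rangle_{\fqn})$. The point $P_i = \langle\mathbf{e}_i\rangle_{\fqn}$ satisfies $U \cap \langle\mathbf{e}_i\rangle_{\fqn} = \{(0,\dots,0,u,0,\dots,0)\colon u\in U_i\}$, which has $\fq$-dimension exactly $k_i \ge 2$; hence $w_{L_U}(P_i) = k_i \ge 2$ automatically, which is precisely condition b) of Theorem~\ref{th:character} and also shows the $P_i$ always have weight at least two (so they are genuinely the exceptional points). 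A point $\langle\mathbf{v}\rangle_{\fqn}$ of $L_U$ distinct from all the $P_i$ is exactly a point with $\langle\mathbf{v}\rangle_{\fqn} \notin \{\langle\mathbf{e}_i\rangle_{\fqn}\colon i\in\{1,\dots,r\}\}$, and such a point has weight one if and only if $\dim_{\fq}(U\cap\langle\mathbf{v}\rangle_{\fqn}) \le 1$ (it is always at least one, since $\langle\mathbf{v}\rangle_{\fqn}$ being a point of $L_U$ forces $U$ to contain a nonzero vector of that line). Therefore ``all points of $L_U$ other than the $P_i$ have weight one'' is literally the statement of condition a) of Theorem~\ref{th:character}. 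Combining: i) here $\Leftrightarrow$ [a) and b)] $\Leftrightarrow$ i) of Theorem~\ref{th:character} $\Leftrightarrow$ ii) $\Leftrightarrow$ iii), and we are done.

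There is essentially no obstacle here; the content is bookkeeping about which points of $\PG(r-1,q^n)$ correspond to which $\mathbf{v}$, and the one subtlety worth stating explicitly is that the $P_i$ are indeed points of $L_U$ of weight $\ge 2$ (so the phrase ``points of $L_U$ different from $P_i$'' in i) matches the phrase ``$\mathbf{v}$ with $\langle\mathbf{v}\rangle_{\fqn}\notin\{\langle\mathbf{e}_i\rangle_{\fqn}\}$'' in a)), which follows immediately from $k_i\ge 2$. I would also remark in passing that the hypothesis $\sum_i k_i = k \le (r-1)n$ is only needed to guarantee that $U$ has dimension at most $(r-1)n$, i.e.\ that $L_U$ is a proper linear set and the $P_i$ span a frame of a subspace of $\PG(r-1,q^n)$ in the natural way, but it plays no role in the equivalence of the three conditions themselves, which is purely a statement about the $\fq$-subspaces $U_1,\dots,U_r$.
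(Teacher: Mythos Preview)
Your proposal is correct and matches the paper's approach exactly: the paper presents Theorem~\ref{th:SS-1} without a separate proof, introducing it as the linear-set reformulation of Theorem~\ref{th:character}, and your argument spells out precisely that translation. The only microscopic imprecision is the word ``literally'' when matching i) to condition a) (since a) ranges over all $\mathbf{v}$, not just those giving points of $L_U$), but you implicitly handle this by noting that points outside $L_U$ have weight zero, so the equivalence is immediate.
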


Now, we will prove a bound on the rank of these linear sets. This bound relies on the following result from \cite{NPSZ2021}.

\begin{theorem}\cite[Theorem 4.4]{NPSZ2021}\label{th:line2points}
Let $L$ be an $\fq$-linear set of $\PG(1,q^n)$ with exactly two points $P$ and $Q$ of weight greater than one, then 
\[ w_L(P)\leq \frac{n}2 \,\,\text{and}\,\,w_L(Q)\leq \frac{n}2. \]
\end{theorem}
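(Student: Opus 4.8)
The plan is to translate the statement into an elementary fact about two $\fq$-subspaces of $\fqn$ and then combine it with the Grassmann dimension inequality. First I would normalise the configuration: since $\mathrm{PGL}(2,q^n)$ is $2$-transitive on the points of $\PG(1,q^n)$ and collineations preserve weights, after a suitable collineation I may assume $P=\la\mathbf{e}_1\ra_{\fqn}$, $Q=\la\mathbf{e}_2\ra_{\fqn}$ and $L=L_U$ for an $\fq$-subspace $U\le\fqn\times\fqn$. Set $U_1=\{x\in\fqn:(x,0)\in U\}$ and $U_2=\{y\in\fqn:(0,y)\in U\}$, so that $\dim_{\fq}U_1=w_L(P)$ and $\dim_{\fq}U_2=w_L(Q)$, both $\ge 2$ by hypothesis. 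For every $\alpha\in\fqn^*$ the point $\la(1,\alpha)\ra_{\fqn}$ is distinct from $P$ and $Q$, hence has weight $\le 1$ in $L$; on the other hand $(t,\alpha t)=(t,0)+(0,\alpha t)\in U$ whenever $t\in U_1\cap\alpha^{-1}U_2$, so $w_L(\la(1,\alpha)\ra_{\fqn})\ge\dim_{\fq}(U_1\cap\alpha^{-1}U_2)$. Letting $\alpha$ run over $\fqn^*$ this yields the key condition $\dim_{\fq}(U_1\cap\beta U_2)\le 1$ for all $\beta\in\fqn^*$ — essentially the easy half of the reformulation behind Theorem \ref{th:SS-1}; I would extract it directly, as above, rather than via Proposition \ref{prop:structure}, so as not to have to first establish $w_L(P)+w_L(Q)=\mathrm{Rank}(L)$. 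This condition is moreover symmetric in $U_1,U_2$, since $\dim_{\fq}(U_1\cap\beta U_2)=\dim_{\fq}(\beta^{-1}U_1\cap U_2)$, so it suffices to prove $\dim_{\fq}U_1\le n/2$.

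The crux is then short. Assume for a contradiction that $a:=\dim_{\fq}U_1>n/2$, so $2a-n\ge 1$. Choose $v_1,v_2\in U_2$ that are $\fq$-linearly independent (possible since $\dim_{\fq}U_2\ge 2$) and put $c=v_1v_2^{-1}\in\fqn^*$. By the Grassmann formula $\dim_{\fq}(U_1\cap cU_1)\ge 2a-n\ge 1$, so there is a nonzero $w\in U_1\cap cU_1$; set $w'=c^{-1}w\in U_1\setminus\{0\}$. Taking $\beta=wv_1^{-1}$ one has $w=\beta v_1$ and $w'=c^{-1}w=(v_2v_1^{-1})w=\beta v_2$, so $w$ and $w'$ are two nonzero elements of $U_1\cap\beta U_2$. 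The key condition forces $w'=\lambda w$ with $\lambda\in\fq^*$, hence $v_2v_1^{-1}=\lambda$, i.e.\ $v_2=\lambda v_1$, contradicting the independence of $v_1,v_2$. Thus $\dim_{\fq}U_1\le n/2$, and by the symmetry of the key condition the same argument with the roles of $U_1$ and $U_2$ interchanged gives $\dim_{\fq}U_2\le n/2$; that is, $w_L(P)\le n/2$ and $w_L(Q)\le n/2$.

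I expect the only real difficulty to be spotting the right move in the crux: the hypothesis controls only the \emph{mixed} intersections $U_1\cap\beta U_2$, whereas a dimension count naturally produces a nontrivial \emph{self}-overlap $U_1\cap cU_1$ once $\dim_{\fq}U_1>n/2$. The key idea is that taking $c$ to be the ratio $v_1v_2^{-1}$ of two $\fq$-independent vectors of $U_2$ converts such a self-overlap into a $2$-dimensional mixed intersection, which the hypothesis forbids. The remaining ingredients — normalising $P,Q$, reading off $U_1,U_2$, and the Grassmann estimate — are routine.
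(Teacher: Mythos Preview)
The paper does not supply its own proof of this theorem: it is quoted verbatim as \cite[Theorem 4.4]{NPSZ2021} and used as a black box in the proof of Theorem~\ref{th:boundLS}. So there is no in-paper argument to compare your proposal against.

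That said, your argument is correct and self-contained. The reduction to the key condition $\dim_{\fq}(U_1\cap\beta U_2)\le 1$ for all $\beta\in\fqn^*$ is exactly the $r=2$ instance of the equivalence in Theorem~\ref{th:SS-1} (or Theorem~\ref{th:character}), and your direct extraction of it via $(t,\alpha t)=(t,0)+(0,\alpha t)$ is clean and avoids assuming $w_L(P)+w_L(Q)=\mathrm{Rank}(L)$. The crux---turning a self-overlap $U_1\cap cU_1$ with $c=v_1v_2^{-1}$ into a $2$-dimensional mixed intersection $U_1\cap\beta U_2$---is a nice trick and the computations check: $w=\beta v_1$, $w'=c^{-1}w=\beta v_2$, and $w'=\lambda w$ forces $c^{-1}=\lambda\in\fq^*$, contradicting the $\fq$-independence of $v_1,v_2$. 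One minor stylistic point: you could phrase the last contradiction directly as ``$c\in\fq^*$, hence $v_1=cv_2$'', which is slightly cleaner than passing through $v_2=\lambda v_1$.
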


The next result extends \cite[Theorem 4.4]{NPSZ2021} to linear sets in $\mathrm{PG}(r-1,q^n)$.

\begin{theorem}\label{th:boundLS}
Let $L_W$ be an $\fq$-linear set in $\mathrm{PG}(r-1,q^n)$ of rank $k$ containing $r$ independent points $P_1=\langle \mathbf{v}_1\rangle_{\fqn},\ldots,P_r=\langle \mathbf{v}_r\rangle_{\fqn}$ such that
\[ w_{L_W}(P_1)+\ldots+w_{L_W}(P_r)=k\leq (r-1)n. \] 
Suppose that $w_{L_W}(Q)\leq 1$ for every $Q \in \mathrm{PG}(r-1,q^n) \setminus\{P_1,\ldots,P_r\}$.
Then
\begin{equation}\label{eq:weightpoints} w_{L_W}(P_i)\leq \frac{n}2 
\end{equation}
for every $i\in \{1,\ldots,r\}$.
Moreover,
\begin{itemize}
    \item $\mathrm{Rank}(L_W)\leq \frac{rn}2$;
    %\item if $rn$ is odd then $\mathrm{Rank}(L_W)< \frac{rn}2$;
    \item if $\mathrm{Rank}(L_W)= \frac{rn}2$, then $w_{L_W}(P_i)=\frac{n}2$ for every $i \in \{1,\ldots,r\}$.
\end{itemize}
\end{theorem}
\begin{proof}
By Proposition \ref{prop:structure} $L_W$ is $\mathrm{PGL}(r,q^n)$-equivalent to $L_U$ where 
$U=U_1\times \ldots \times U_r,$
for some $\fq$-subspaces $U_1,\ldots,U_r$ of $\fqn$ such that $\dim_{\fq}(U_i)=w_{L_W}(P_i)$ for every $i \in \{1,\ldots,r\}$.
So, the only points with weight greater than one in $L_U$ are $Q_i=\langle \mathbf{e}_i \rangle_{\fqn}$ for every $i \in \{1,\ldots,r\}$ and $w_{L_W}(P_i)=w_{L_U}(Q_i)$ for every $i \in \{1,\ldots,r\}$.
Let $i,j \in \{1,\ldots,r\}$ with $i\ne j$ and consider the line $\ell=\mathrm{PG}(\langle \mathbf{e}_i,\mathbf{e}_j\rangle_{\fqn},\fqn)=\langle Q_i,Q_j \rangle$.
We first observe that for each $Q \in \ell$ then $w_{L_U}(Q)=w_{L_U\cap \ell}(Q)$ and hence the only points of weight greater than one in $L_U\cap \ell$ are $Q_i$ and $Q_j$.
By Theorem \ref{th:line2points} we have that
\[ w_{L_U\cap \ell}(Q_i)=w_{L_U}(Q_i)\leq \frac{n}2\,\,\,\text{and}\,\,\,w_{L_U\cap \ell}(Q_j)=w_{L_U}(Q_j)\leq \frac{n}2, \]
and the first part of the assertion then follows.
The second part is a consequence of \eqref{eq:weightpoints}.
\end{proof}

\begin{remark}\label{rk:sizemaximumls}
Let $L_W$ be an $\fq$-linear set in $\mathrm{PG}(r-1,q^n)$ of rank $\frac{rn}2$ containing $r$ independent points $P_1=\langle \mathbf{v}_1\rangle_{\fqn},\ldots,P_r=\langle \mathbf{v}_r\rangle_{\fqn}$ such that $w_{L_W}(P_i)=\frac{n}2$ for every $i \in \{1,\ldots,r\}$.
By \eqref{eq:pesicard} and \eqref{eq:pesivett}, recalling that $N_i$ is the number of points in $L_W$ having weight $i$, we obtain that
\[ N_1=q^{\frac{rn}2-1}+\ldots+q^{\frac{n}2}-(r-1)(q^{\frac{n}2-1}+\ldots+1),\,\,\, N_{\frac{n}2}=r\,\,\text{and}\,\, N_0=\frac{q^{rn}-1}{q^n-1}-N_1-N_{\frac{n}2}. \]
In particular
\[ |L_W|=q^{\frac{rn}2-1}+\ldots+q^{\frac{n}2}-(r-1)(q^{\frac{n}2-1}+\ldots+q)+1. \]
\end{remark}

We can now construct linear sets in $\mathrm{PG}(r-1,q^n)$ with exactly $r$ points of weight greater than one and in particular we will see constructions when then rank is maximum, that is $\frac{rn}2$.
Multi-Sidon spaces can be used to this aim.

\begin{proposition}
Let $U_1,\ldots,U_r$ be $r$ $\fq$-subspaces of $\fqn$ of dimensions $k_1,\ldots,k_r$, respectively.
Let $U=U_1\times \ldots \times U_r$.
If $\{U_1,\ldots,U_r\}$ is a multi-Sidon space, then $L_U$ is an $\fq$-linear set of rank $k_1+\ldots+k_r$ in $\PG(r-1,q^n)$ such that the only points of weight greater than one are the $P_i=\langle \mathbf{e}_i\rangle_{\fqn}$'s with $i \in \{1,\ldots,r\}$.
\end{proposition}
\begin{proof}
Since $\{U_1,\ldots,U_r\}$ is a multi-Sidon space, we have that
\[ \dim_{\fq}(U_i\cap \alpha U_j)\leq 1, \]
for every $\alpha \in \fqn$ and $i,j \in \{1,\ldots,r\}$ with $i\ne j$.
The assertion then follows directly by Theorem \ref{th:SS-1}.
\end{proof}

In particular, when considering maximum multi-Sidon spaces, we can get examples of linear sets whose rank satisfy the equality in Theorem \ref{th:boundLS}.

\begin{corollary}\label{cor:codelinearsets}
Let $n=2k$ and $U_1,\ldots,U_r$ be $r$ $\fq$-subspaces of $\fqn$ of dimension $k$.
Let $U=U_1\times \ldots \times U_r$.
If $\{U_1,\ldots,U_r\}$ is a multi-Sidon space, then $L_U$ is an $\fq$-linear set of rank $rk$ in $\PG(r-1,q^n)$ such that the only points of weight greater than one are the $P_i=\langle \mathbf{e}_i\rangle_{\fqn}$'s with $i \in \{1,\ldots,r\}$.
\end{corollary}

The multi-cyclic subspace codes found in Proposition \ref{prop:examplepseudo} can be used to get constructions of such linear sets via Corollary \ref{cor:codelinearsets}.

\begin{corollary}\label{cor:ex1ls}
Let $s$ be a positive integer coprime with $k\geq 2$ and $n=2k$, let $\xi \in \fqn\setminus \F_{q^k}$ and let $f_i(x)=\mu_i x^{q^s} \in \mathcal{L}_{k}$ for $i \in \{1,\ldots,r\}$ such that $r\leq q-1$,  $\N_{q^k/q}(\mu_i)\ne\N_{q^k/q}(\mu_j)$ and $\N_{q^k/q}(\mu_i \mu_j \xi^{q^k+1})\ne 1$ for every $i\ne j$.
Denote by $U_i=W_{f_i,\xi}$ for every $i \in \{1,\ldots,r\}$.
Let $U=U_1\times \ldots \times U_r$,
then $L_U$ is an $\fq$-linear sets of rank $rk$ in $\PG(r-1,q^n)$ such that the only points of weight greater than one are the $P_i=\langle \mathbf{e}_i\rangle_{\fqn}$'s with $i \in \{1,\ldots,r\}$.
\end{corollary}

Another construction may be obtained by extending a multi-Sidon space with subfield $\F_{q^k}$ of $\fqn$.

\begin{corollary}\label{cor:ex2ls}
Let $s$ be a positive integer coprime with $k\geq 2$ and $n=2k$, let $\xi \in \fqn\setminus \F_{q^k}$ and let $f_i(x)=\mu_i x^{q^s} \in \mathcal{L}_{k}$ for $i \in \{1,\ldots,r-1\}$ such that $r\leq q-1$, $\N_{q^k/q}(\mu_i)\ne\N_{q^k/q}(\mu_j)$ and $\N_{q^k/q}(\mu_i \mu_j \xi^{q^k+1})\ne 1$ for every $i\ne j$.
Denote by $U_i=W_{f_i,\xi}$ for every $i \in \{1,\ldots,r-1\}$ and by $U_r=\F_{q^k}$.
Let $U=U_1\times \ldots \times U_r$,
then $L_U$ is an $\fq$-linear set of rank $rk$ in $\PG(r-1,q^n)$ such that the only points of weight greater than one are the $P_i=\langle \mathbf{e}_i\rangle_{\fqn}$'s with $i \in \{1,\ldots,r\}$.
\end{corollary}
\begin{proof}
By Theorem \ref{th:SS-1} and by Proposition \ref{prop:examplepseudo}, it is enough to show that
\[ \dim_{\fq}(\F_{q^k}\cap\alpha U_i)\leq 1, \]
for every $i \in \{1,\ldots,r-1\}$ for any $\alpha \in \fqn^*$.
Let $\alpha \in \fqn^*$.
Then there exist $\alpha_0,\alpha_1 \in \F_{q^k}$ such that $\alpha=\alpha_0+\alpha_1 \xi$. 
Let $v \in \F_{q^k}$, we look for solutions in $u \in \F_{q^k}$ such that
\[ v=\alpha(u+u^{q^s}\mu_i\xi). \]
Using that $\xi^2=a\xi+b$, for some $a,b \in \F_{q^k}$, and that $\alpha=\alpha_0+\alpha_1 \xi$, the above relation reads as follows
\[ 
\left\{
\begin{array}{ll}
\alpha_0 u +b \alpha_1 u^{q^s} \mu_i=v,\\
\alpha_1 u+\alpha_0 \mu_i u^{q^s} +\mu_i\alpha_1 a u^{q^s}=0.
\end{array}
\right.
\]
Since $\gcd(s,k)=1$, $\alpha_1 u+\alpha_0 \mu_i u^{q^s} +\mu_i\alpha_1 a u^{q^s}$ can be seen as a non-zero $q^s$-polynomial over $\F_{q^k}$ of $q^s$-degree at most one. Therefore, by Theorem \ref{Gow} it follows that the number of $u \in \F_{q^k}$ which are solutions of the above system is at most $q$ and hence 
\[ \dim_{\fq}(\F_{q^k}\cap \alpha U_i)\leq 1. \]
\end{proof}

In the next subsection we will prove that the two $\fq$-subspaces defined in the above corollaries are $\Gamma\mathrm{L}(r,q^n)$-inequivalent.

\subsection{$\Gamma\mathrm{L}(r,q^n)$-equivalence}

We now provide a useful tool that can be used to study the $\Gamma\mathrm{L}(r,q^n)$-equivalence of the subspaces representing linear sets with only $r$ points of weight greater than one, which extends \cite[Lemma 3.3]{NPSZ2021}. 

\begin{theorem}\label{prop:criteria}
Let $U_1,\ldots,U_r$ and $W_1,\ldots,W_r$ be $\fq$-subspaces of $\fqn$ such that
\[ \dim_{\fq}(U_1)+\ldots+ \dim_{\fq}(U_r)=k\leq(r-1)n \]
and 
\[ \dim_{\fq}(W_1)+\ldots+ \dim_{\fq}(W_r)=k\leq(r-1)n. \]
Let $U=U_1\times \ldots\times U_r$ and $W=W_1\times \ldots\times W_r$ and suppose that $P_i=\langle\mathbf{e}_i\rangle_{\fqn}$ are the only points of weight greater than one in $L_U$ and in $L_W$. 
Then $U$ and $W$ are $\mathrm{\Gamma L}(r,q^n)$-equivalent if and only if there exist a permutation $\sigma \in S_r$, $\lambda_1,\ldots,\lambda_r \in \fqn^*$ and $\rho \in \mathrm{Aut}(\fqn)$ such that
\[ W_i=\lambda_iU_{\sigma(i)}^\rho, \]
for every $i\in \{1,\ldots,r\}$.
In particular, $\dim_{\fq}(W_i)=\dim_{\fq}(U_{\sigma(i)})$, for every $i\in \{1,\ldots,r\}$.
\end{theorem}
\begin{proof}
The ``if'' part is trivial.
So, suppose that $U$ and $W$ are $\Gamma\mathrm{L}(r,q^n)$-equivalent via $\varphi \in \mathrm{\Gamma L}(r,q^n)$ defined by the matrix $A\in \mathrm{GL}(r,q^n)$ and the automorphism $\rho \in \mathrm{Aut}(\fqn)$.
Let $\dim_{\fq}(U_i)=k_i$ and $\dim_{\fq}(W_i)=k_i'$, for any $i \in \{1,\ldots,r\}$.

By Theorem \ref{th:character}, we have that
    \begin{itemize}
        \item[a)] $\dim_{\fq}(U\cap \langle \mathbf{v}\rangle_{\fqn})\leq 1$ for any $\mathbf{v}\in \fqn^r$ such that $\langle\mathbf{v}\rangle_{\fqn}\notin \{\langle \mathbf{e}_i \rangle_{\fqn} \colon i \in \{1,\ldots,r\}\}$;
        \item[b)] $\dim_{\fq}(U\cap \langle \mathbf{e}_i\rangle_{\fqn})=k_i$ for any $i \in \{1,\ldots,r\}$, 
    \end{itemize}
and the same properties hold for $W$.
As a consequence, there exists $\sigma \in S_r$ such that $\varphi(\langle \mathbf{e}_i\rangle_{\fqn})=\langle \mathbf{e}_{\sigma(i)}\rangle_{\fqn}$.
In particular, this means that $A$ is the product of a permutation matrix (the one induced by $\sigma$) and a diagonal matrix (whose diagonal elements are $\lambda_1,\ldots,\lambda_r\in \fqn^*$).
So,
\[ W_i=\lambda_iU_{\sigma(i)}^\rho, \]
for every $i\in \{1,\ldots,r\}$ and  $\{k_1,\ldots,k_r\}=\{k_1',\ldots,k_r'\}$.
\end{proof}

As an easy consequence we obtain that the dimensions of the subspaces are not in one-to-one correspondence, then the two subspaces are inequivalent.

\begin{corollary}
Let $U_1,\ldots,U_r$ and $W_1,\ldots,W_r$ be $\fq$-subspaces of $\fqn$ of dimension $k_1,\ldots,k_r$ and $k_1',\ldots,k_r'$, respectively.
Let $U=U_1\times \ldots\times U_r$ and $W=W_1\times \ldots\times W_r$ and suppose that $P_i=\langle\mathbf{e}_i\rangle_{\fqn}$ are the only points of weight greater than one in $L_U$ and in $L_W$. 
If $(k_1,\ldots,k_r)$ cannot be obtained from $(k_1',\ldots,k_r')$ by permuting their entries, then $U_1\times\ldots\times U_r$ and $W_1\times \ldots\times W_r$ are $\Gamma\mathrm{L}(r,q^n)$-inequivalent.
\end{corollary}

The $\Gamma\mathrm{L}(r,q^n)$-equivalence of the examples in Proposition \ref{prop:examplepseudo} has been already studied in Corollary \ref{cor:equivstronSidonpseudo}.
Whereas, as shown in the next result, the examples of Corollaries \ref{cor:ex1ls} and \ref{cor:ex2ls} cannot be $\Gamma\mathrm{L}(r,q^n)$-equivalent.

\begin{proposition}\label{prop:inequivex1and2}
Let $s,s'$ be two positive integers coprime with $k\geq 2$ and let $n=2k$, let $\xi,\eta \in \fqn\setminus \F_{q^k}$ and let $f_i(x)=\mu_i x^{q^s} \in \mathcal{L}_{k}$ and $g_\ell(x)=\overline{\mu}_\ell x^{q^{s'}} \in \mathcal{L}_{k}$ for $i \in \{1,\ldots,r\}$ and $\ell \in \{1,\ldots,r-1\}$ such that $r\leq q-1$, $\N_{q^k/q}(\mu_i)\ne\N_{q^k/q}(\mu_j)$, $\N_{q^k/q}(\overline{\mu}_i)\ne\N_{q^k/q}(\overline{\mu}_j)$, $\N_{q^k/q}(\mu_i \mu_j \xi^{q^k+1})\ne 1$ and $\N_{q^k/q}(\overline{\mu}_i \overline{\mu}_j \eta^{q^k+1})\ne 1$ for every $i\ne j$.
Then $U=W_{f_1,\xi}\times \ldots\times W_{f_r,\xi}$ and $W=W_{g_1,\eta}\times\ldots\times W_{g_{r-1},\eta}\times\F_{q^k}$ are $\Gamma\mathrm{L}(r,q^n)$-inequivalent.
\end{proposition}
\begin{proof}
By Proposition \ref{prop:criteria}, if $U$ and $W$ are $\Gamma\mathrm{L}(r,q^n)$-equivalent then there exist $i \in \{1,\ldots,r\}$, $\lambda \in \fqn^*$ and $\rho \in \mathrm{Aut}(\fqn)$ such that
\[ \F_{q^k}=\lambda ( W_{f_i,\xi} )^\rho. \]
This cannot happen since the $\fq$-subspace $W_{f_i,\xi}$ has the property that
\[ \dim_{\fq}(W_{f_i,\xi}\cap \alpha W_{f_i,\xi})\leq 1, \]
for every $\alpha \in \fqn\setminus \fq$.
This property is preserved also by $\lambda ( W_{f_i,\xi} )^\rho$, but clearly $\dim_{\fq}(\F_{q^k}\cap \alpha \F_{q^k})=k$ if $\alpha \in \F_{q^k}\setminus \fq$.
\end{proof}

Moreover, arguing as before, the $\Gamma\mathrm{L}(r,q^n)$-equivalence between two examples in Corollary \ref{cor:ex2ls} is determined in the next result.

\begin{proposition}\label{prop:equiv2exsubfield}
Let $s,s'$ be two positive integers coprime with $k$ and let $n=2k$, let $\xi,\eta \in \fqn\setminus \F_{q^k}$ and let $f_i(x)=\mu_i x^{q^s} \in \mathcal{L}_{k}$ and $g_i(x)=\overline{\mu}_i x^{q^{s'}} \in \mathcal{L}_{k}$ for $i \in \{1,\ldots,r-1\}$ such that $r\leq q-1$, $\N_{q^k/q}(\mu_i)\ne\N_{q^k/q}(\mu_j)$, $\N_{q^k/q}(\overline{\mu}_i)\ne\N_{q^k/q}(\overline{\mu}_j)$, $\N_{q^k/q}(\mu_i \mu_j \xi^{q^k+1})\ne 1$ and $\N_{q^k/q}(\overline{\mu}_i \overline{\mu}_j \eta^{q^k+1})\ne 1$ for every $i\ne j$.
Then $U=W_{f_1,\xi}\times \ldots\times W_{f_{r-1},\xi}\times \F_{q^k}$ and $W=W_{g_1,\eta}\times\ldots\times W_{g_{r-1},\eta}\times\F_{q^k}$ are $\Gamma\mathrm{L}(r,q^n)$-equivalent if and only if there exist a permutation $\sigma \in S_{r-1}$, $\lambda_1,\ldots,\lambda_{r-1} \in \fqn^*$ and $\rho \in \mathrm{Aut}(\fqn)$ such that, for every $i \in \{1,\ldots,r-1\}$, one of the following occurs
\begin{itemize}
    \item $s \equiv s' \pmod{k}$, $B=0$, $\xi =\frac{\eta^{\rho}}{A}$ and $\overline{\mu}_{\sigma(i)}=\frac{\mu_i^{\rho^{-1}}c}{A^{\rho^{-1}}}$, where $c\in \F_{q^k}$ is such that $\N_{q^k/q}(c)=1$;
    \item $s \equiv -s' \pmod{k}$, $\xi=\frac{\eta^\rho+Aa}{A}$, $\overline{\mu}_{\sigma(i)}=\frac{1}{c \mu_i^{q^{-s}\rho^{-1}} A^{\rho^{-1}}b^{\rho^{-1}}}$ and $B=-Aa$, where $c\in \F_{q^k}$ is such that $\N_{q^k/q}(c)=1$,
\end{itemize}
where $\xi^2=a\xi+b$ and $\eta^\rho=A\xi+B$ with $a,b,A,B \in \F_{q^k}$.
\end{proposition}
%\begin{proof}
%Suppose that $U$ and $W$ are $\Gamma\mathrm{L}(r,q^n)$-equivalent.
%By Proposition \ref{prop:criteria}, then either
%\[ \F_{q^t}=\lambda ( W_{f_i,\xi} )^\rho, \]
%for some $i \in \{1,\ldots,r-1\}$, or $U'=W_{f_1,\xi}\times \ldots\times W_{f_{r-1},\xi}$ and $W'=W_{g_1,\eta}\times\ldots\times W_{g_{r-1},\eta}$ are $\Gamma\mathrm{L}(r,q^n)$-equivalent.
%As in the proof of Proposition \ref{prop:inequivex1and2}, one can see that the former case cannot happen.
%Therefore, the assertion then follows by Corollary \ref{cor:equivstronSidonpseudo}.
%\end{proof}

\subsection{Dual linear sets}

Interestingly, the linear sets $L_U$ of this section by duality give examples of sets with only three weights with respect to the hyperplanes and for each of these values there exists at least one hyperplane having this weight with respect to $L_U$.

\begin{theorem}\label{th:3weighthyper}
Let $L_U$ be an $\fq$-linear set in $\mathrm{PG}(r-1,q^n)$ of rank $\frac{rn}2$ containing $r$ independent points $P_1=\langle \mathbf{v}_1\rangle_{\fqn},\ldots,P_r=\langle \mathbf{v}_r\rangle_{\fqn}$ such that
\[ w_{L_U}(P_1)+\ldots+w_{L_U}(P_r)=\frac{rn}2. \] 
Then for every hyperplane $H$ in $\mathrm{PG}(r-1,q^n)$
\[ w_{L_U^\tau}(H)\in \left\{ \frac{rn}2-n,\frac{rn}2-n+1,\frac{rn}2-n+r \right\}. \]
More precisely,
\begin{itemize}
    \item $w_{L_U^\tau}(H)=\frac{rn}2-n+r$ if and only if $H=P_i^\tau$ for some $i \in \{1,\ldots,r\}$;
    \item $w_{L_U^\tau}(H)=\frac{rn}2-n+1$ if and only if $H=P^\tau$ for some $P \in L_U\setminus\{P_1,\ldots,P_r\}$;
    \item $w_{L_U^\tau}(H)=\frac{rn}2-n$ if and only if $H=P^\tau$ for some $P \in \PG(r-1,q^n)\setminus L_U$.
\end{itemize}
Moreover, 
\begin{itemize}
    \item the number of hyperplanes $H$ such that $w_{L_U^\tau}(H)=\frac{rn}2-n+r$ is $r$;
    \item the number of hyperplanes $H$ such that $w_{L_U^\tau}(H)=\frac{rn}2-n+1$ is $q^{\frac{rn}2-1}+\ldots+q^{\frac{n}2}-(r-1)(q^{\frac{n}2-1}+\ldots+1)$;
    \item the number of hyperplanes $H$ such that $w_{L_U^\tau}(H)=\frac{rn}2-n$ is $\frac{q^{rn}-1}{q^n-1}-q^{\frac{rn}2-1}-\ldots-q^{\frac{n}2}+(r-1)(q^{\frac{n}2-1}+\ldots+q)-1$.
\end{itemize}
\end{theorem}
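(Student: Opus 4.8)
The plan is to combine Proposition~\ref{prop:weightdual} with the weight distribution of $L_U$ itself, which we already understand completely. By Theorem~\ref{th:boundLS} and Remark~\ref{rk:sizemaximumls}, the linear set $L_U$ of rank $\frac{rn}{2}$ has exactly $r$ points of weight $\frac{n}{2}$ (the $P_i$'s), the remaining $N_1 = q^{\frac{rn}{2}-1}+\ldots+q^{\frac{n}{2}}-(r-1)(q^{\frac{n}{2}-1}+\ldots+1)$ points of $L_U$ have weight $1$, and all other points of $\PG(r-1,q^n)$ have weight $0$. First I would apply Proposition~\ref{prop:weightdual} with $S$ a point $P$ (so $s=0$), which gives
\[ w_{L_U^\tau}(P^\tau) = w_{L_U}(P) + rn - k - n = w_{L_U}(P) + \frac{rn}{2} - n. \]
Substituting the three possible values $w_{L_U}(P) \in \{n/2, 1, 0\}$ yields exactly the three claimed values $\frac{rn}{2}-n+r$ (recalling $k=\frac{rn}{2}$ and $w_{L_U}(P_i)=\frac n2$ gives $\frac n2 + \frac{rn}2 - n = \frac{rn}2 - n + r$ only when... wait, one must check $\frac n2 = r - \frac n2$? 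No), $\frac{rn}{2}-n+1$, and $\frac{rn}{2}-n$.

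Let me restate this more carefully. Since every hyperplane $H$ of $\PG(r-1,q^n)$ is $P^\tau$ for a unique point $P$ (as $\tau$ is a polarity, hence a bijection between points and hyperplanes), the three cases in the statement are a partition of all hyperplanes according to whether $P \in \{P_1,\ldots,P_r\}$, $P \in L_U \setminus \{P_1,\ldots,P_r\}$, or $P \notin L_U$. For the first case I need $w_{L_U}(P_i) + \frac{rn}{2} - n = \frac{rn}{2} - n + r$, i.e.\ $w_{L_U}(P_i) = r$; but $w_{L_U}(P_i) = \frac n2$, so the statement as written requires $\frac n2$ in place of $r$ unless I am misreading—here I would simply trust the displayed formula of the theorem and recompute: $w_{L_U^\tau}(P_i^\tau) = \frac{n}{2} + \frac{rn}{2} - n = \frac{rn}{2} - \frac{n}{2}$, and one checks whether the paper intends $\frac{rn}{2}-n+r$ to equal this, which forces... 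In the write-up I would present the computation $w_{L_U^\tau}(H) = w_{L_U}(P) + \frac{rn}{2} - n$ and read off the three values directly from $w_{L_U}(P) \in \{n/2,1,0\}$, matching them to the three bullets (the first being $\frac{rn}{2}-\frac n2$, which is what the theorem's $\frac{rn}{2}-n+r$ must equal under the running hypotheses, or else I would flag a typo). The equivalences ``if and only if'' in each bullet are then immediate since the three values $n/2, 1, 0$ are distinct (using $n \geq 4$, guaranteed since $k_i = n/2 \geq 2$).

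**Counting the hyperplanes.** For the three cardinalities I would invoke the bijectivity of $\tau$ once more: the number of hyperplanes with $w_{L_U^\tau}(H) = \frac{rn}{2}-n+r$ equals the number of points $P_i$, which is $r$; the number with $w_{L_U^\tau}(H) = \frac{rn}{2}-n+1$ equals $|L_U| - r = N_1$, which by Remark~\ref{rk:sizemaximumls} is $q^{\frac{rn}{2}-1}+\ldots+q^{\frac{n}{2}}-(r-1)(q^{\frac{n}{2}-1}+\ldots+1) - r$; and the last equals the total number of hyperplanes $\frac{q^{rn}-1}{q^n-1}$ minus the previous two. Here I would need to reconcile the arithmetic with the stated counts (the paper writes $q^{\frac{rn}{2}-1}+\ldots+q^{\frac{n}{2}}-(r-1)(q^{\frac{n}{2}-1}+\ldots+1)$ for the middle count, which differs from $N_1$ by the additive constant coming from how the geometric sums telescope with the ``$+1$'' and ``$-r$'' terms), so a short bookkeeping step with the identity $q^{n/2-1}+\ldots+1 = \frac{q^{n/2}-1}{q-1}$ closes it.

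**Main obstacle.** The only substantive point is that the whole argument rests on $\tau$ mapping points bijectively to hyperplanes and preserving incidence in the duality sense, so that ``$H = P^\tau$'' exhausts all hyperplanes exactly once; this is standard for polarities and needs only a sentence. Everything else is the single substitution into Proposition~\ref{prop:weightdual} followed by elementary manipulation of geometric series; the mild annoyance is only keeping the additive constants straight between the form of $N_1$ in Remark~\ref{rk:sizemaximumls} and the form of the counts as stated in the theorem.
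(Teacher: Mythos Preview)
Your approach is essentially identical to the paper's: apply Proposition~\ref{prop:weightdual} with $S=P$ a point (so $s=0$), use Theorem~\ref{th:boundLS} to get $w_{L_U}(P_i)=n/2$, and read off the hyperplane counts from the point--weight distribution in Remark~\ref{rk:sizemaximumls} via the bijection $P\leftrightarrow P^\tau$. The paper's proof is in fact just three lines invoking exactly these ingredients.

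Two small remarks. First, you are right that the displayed value $\tfrac{rn}{2}-n+r$ is a typo: the computation gives $w_{L_U^\tau}(P_i^\tau)=\tfrac{n}{2}+\tfrac{rn}{2}-n=\tfrac{rn}{2}-\tfrac{n}{2}$, so ``$r$'' should be ``$\tfrac{n}{2}$'' throughout the statement. Second, your worry about an additive discrepancy in the middle count is unfounded: the quantity $q^{\frac{rn}{2}-1}+\ldots+q^{\frac{n}{2}}-(r-1)(q^{\frac{n}{2}-1}+\ldots+1)$ in the theorem \emph{is} exactly $N_1$ from Remark~\ref{rk:sizemaximumls}, not $N_1-r$; you momentarily conflated $N_1$ with $|L_U|$. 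The last count then follows from $N_0=\tfrac{q^{rn}-1}{q^n-1}-N_1-r$, and the identity $(r-1)(q^{n/2-1}+\ldots+1)-r=(r-1)(q^{n/2-1}+\ldots+q)-1$ gives the form stated.
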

\begin{proof}
By Theorem \ref{th:boundLS}, it follows that all the point $P_i$ have the same weight, that is, $w_{L_U}(P_i)=\frac{n}2$ for every $i$.
By applying Proposition \ref{prop:weightdual} choosing $H$ to be a hyperplane, we obtain
\[ w_{L_U^\tau}(H)=w_{L_U}(H^\tau)+\frac{rn}2-(s+1)n. \]
The assertion then follows by Remark \ref{rk:sizemaximumls}.
\end{proof}

The interest for these linear sets with few possible values for the weight of the hyperplanes is especially due to their coding theoretical counterparts that now we will briefly describe. 

Let $U$ be an $\fq$-subspace of $\F_{q^m}^r$ such that $\langle U \rangle_{\F_{q^m}}=\F_{q^m}^r$.
Let $G$ be a matrix in $\F_{q^m}^{r\times \frac{rn}2}$ whose columns form an $\fq$-basis of $U$. 
Let denote by $\mathcal{C}$ be the $\F_{q^m}$-span of the rows of $G$ and equip it with the \emph{rank metric}, which is defined as follows
\[ d(\mathbf{u},\mathbf{v})=w(\mathbf{u}-\mathbf{v})=\dim_{\fq}(\langle u_1-v_1,\ldots,u_r-v_r\rangle_{\fq}), \]
for any $\mathbf{u}=(u_1,\ldots,u_r), \mathbf{v}=(v_1,\ldots,v_r) \in \F_{q^m}^r$.
So, $\mathcal{C}$ is a \textbf{rank metric code} in $\mathbb{F}_{q^m}^r$ and $w(\mathbf{u})$ is said the \textbf{rank weight} of $\mathbf{u}$; see \cite{surelisa,surjohn} for more details on rank metric codes.
Noting that any element of $\mathcal{C}$ may be seen as $\mathbf{x}G$ for some $\mathbf{x}\in \mathbb{F}_{q^m}^r$, then in \cite{Ra} (see also \cite{Alfarano2021}) it was shown that
\[ w(\mathbf{x}G)=\frac{rn}2-\dim_{\fq}(U \cap \mathbf{x}^\perp), \]
where $\mathbf{x}^\perp$ is the hyperplane whose equation is defined by the entries of $\mathbf{x}$.

As a consequence of Theorem \ref{th:3weighthyper} and the above described connection, the rank metric codes $\mathcal{C}$ have exactly three nonzero weights, which are $n-r,n-1,n$. 
In particular, they are examples of $(r-1)$-\textbf{almost MRD codes}, see \cite{delaFFA}, which means that the rank defect in the Singleton bound for rank metric codes is $r-1$.
Furthermore, we can also derive their weight distribution, indeed except for the zero vector we have
\begin{itemize}
    \item $(q^n-1)r$ elements in $\mathcal{C}$ having rank weight $n-r$;
    \item $(q^n-1)\left(q^{\frac{rn}2-1}+\ldots+q^{\frac{n}2}-(r-1)(q^{\frac{n}2-1}+\ldots+1)\right)$ elements in $\mathcal{C}$ having rank weight $n-1$;
    $(q^n-1)\left(\frac{q^{rn}-1}{q^n-1}-q^{\frac{rn}2-1}-\ldots-q^{\frac{n}2}+(r-1)(q^{\frac{n}2-1}+\ldots+q)-1\right)$ elements in $\mathcal{C}$ having rank weight $n$;
\end{itemize}

Moreover, using \cite[Theorem 4.8]{Alfarano2021}, from $L_U$ we can also construct linear Hamming metric codes with only three weights and for which we can completely establish their weight distribution, as already done for some classes of linear sets (see also \cite{NapZullo,ZiniZulloScatt}). %This procedure yields a large class of three weight Hamming metric codes for which we can completely describe its weight distribution, which in general is a non-trivial task.

\section{Conclusions}

We conclude the paper with some problems that we think could be of interest for the reader.

\begin{open}
In Theorem \ref{th:canonicalform}, we find a canonical form and explicit conditions for multi-orbit subspace codes with large minimum distance. Can this be used to provide new constructions and/or classification results?
\end{open}

\begin{open}
In Corollary \ref{cor:number}, we provide a lower bound on the number of semilinearly inequivalent codes of the form $G_{n,s}$. Determine the precise number of inequivalent codes of the form $G_{n,s}$.
\end{open}

Moreover, a multi-Sidon space $U$ still preserves the uniqueness (up to some scalars in $\fq$) of the factorization of the product of any two elements contained in $U$.

\begin{proposition}\label{prop:strongweakSidon}
Let $\{U_1,\ldots,U_r\}$ be a multi-Sidon space. 
Let $k_i=\dim_{\fq}(U_i)\geq 2$ for any $i \in \{1,\ldots,r\}$. 
Then for any $i,j \in \{1,\ldots,r\}$ if $a,c \in U_i$ and $b,d \in U_j$ are such that $ab=cd$ then
\[ \{a\fq, b\fq\}=\{c\fq, d\fq\}. \]
\end{proposition}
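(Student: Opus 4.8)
The plan is to reduce the statement, which concerns an arbitrary pair of indices $i,j$, to two facts already at hand by splitting into the cases $i\ne j$ and $i=j$. Throughout I take $a,b,c,d$ to be nonzero, exactly as in the definition of a Sidon space (this is needed: a product like $ab=cd=0$ does not force $b\fq=d\fq$).

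For $i\ne j$ I would argue as follows. The requirement $\mathrm{Orb}(U_i)\cap\mathrm{Orb}(U_j)=\emptyset$ built into the definition of a multi-Sidon space says that $U_i\ne\alpha U_j$ for every $\alpha\in\fqn$, so the inequality $\dim_{\fq}(U_i\cap\alpha U_j)\le 1$ holds for all $\alpha$ without any side condition. This is precisely condition (1) of Corollary \ref{cor:UiUj} for the pair $(i,j)$, hence condition (2) applies and gives $a\fq=c\fq$ and $b\fq=d\fq$ whenever $ab=cd$ with $a,c\in U_i$ and $b,d\in U_j$; in particular $\{a\fq,b\fq\}=\{c\fq,d\fq\}$.

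For $i=j$ the idea is to use that each $U_i$ is itself a Sidon space, as remarked right after the definition. To make this precise I would first invoke Theorem \ref{th:orbitsize}: since $|\mathrm{Orb}(U_i)|=\frac{q^n-1}{q-1}$, the largest field over which $U_i$ is a subspace is $\fq$, so $U_i=\alpha U_i$ holds exactly for $\alpha\in\fq^*$. Therefore the multi-Sidon condition, applied with $i=j$, yields $\dim_{\fq}(U_i\cap\alpha U_i)\le 1$ for every $\alpha\in\fqn\setminus\fq$, and Theorem \ref{lem:charSidon2} then identifies $U_i$ as a Sidon space. The defining property of a Sidon space gives $\{a\fq,b\fq\}=\{c\fq,d\fq\}$ from $ab=cd$ with $a,b,c,d\in U_i$, which completes the proof.

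The argument is short and there is no genuine obstacle; the only point requiring a little care is verifying that the hypotheses of Corollary \ref{cor:UiUj} and of Theorem \ref{lem:charSidon2} are legitimately available. In the off-diagonal case this rests on the disjointness of the orbits $\mathrm{Orb}(U_i)$, and in the diagonal case on converting the orbit-size hypothesis into the statement that the stabilizing field of $U_i$ is exactly $\fq$ by means of Theorem \ref{th:orbitsize}.
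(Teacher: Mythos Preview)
Your proof is correct and follows essentially the same route as the paper: split into the cases $i\ne j$ and $i=j$, apply Corollary~\ref{cor:UiUj} in the first case, and use Theorem~\ref{lem:charSidon2} (equivalently Theorem~\ref{lem:charSidon}) to recognize each $U_i$ as a Sidon space in the second. Your write-up is in fact more careful than the paper's in spelling out why the hypotheses of those results are available (orbit disjointness for $i\ne j$, and the orbit-size assumption via Theorem~\ref{th:orbitsize} for $i=j$).
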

\begin{proof}
By Corollary \ref{cor:UiUj}, when $i\ne j$ the $\fq$-subspaces $U_i$ and $U_j$ satisfy the property that for any nonzero $a,c \in U_i$ and nonzero $b,d \in U_j$ the equality $ab=cd$ implies that $a\fq=c\fq$ and $b\fq=d\fq$.
The condition $\dim_{\fq}(U_i\cap\alpha U_i)\leq 1$ for every $\alpha \in \fqn\setminus \fq$ and $i \in \{1,\ldots,r\}$ is equivalent to the fact that $U_i$ is a Sidon space for every $i$ because of Theorem \ref{lem:charSidon2}.
Therefore the assertion follows.
\end{proof}

So, a possible generalization of Sidon spaces could be the following one.

Consider $U_1,\ldots,U_r$ be $r$ distinct $\fq$-subspaces of $\fqn$. We say that $\{U_1,\ldots,U_r\}$ is a \textbf{weak multi-Sidon space} of $\fqn$ if the product of any two nonzero elements from $U_i$'s may be uniquely factorized, up to a scalar factor from $\fq$. More precisely, for any $i,j \in \{1,\ldots,r\}$ if $a,c \in U_i$ and $b,d \in U_j$ are such that $ab=cd$ then
\[ \{a\fq, b\fq\}=\{c\fq, d\fq\}. \]

This latter definition generalizes the property of being multi-Sidon space, as proved in Proposition \ref{prop:strongweakSidon}.
However, we still do not know whether or not there exists examples of weakly multi-Sidon spaces which are not multi-Sidon spaces. 
So, we propose the following problem.

\begin{problem}
Are the properties of being multi-Sidon space and weaker multi-Sidon space equivalent?
If not, construct counterexamples.
\end{problem}

\section*{Acknowledgements}

I would like to thank Oriol Serra, who suggested me to study Sidon spaces during the conference \emph{Discretaly} (2018). I would also like to thank the anonymous referee for their suggestions.
The research was supported by the project ``VALERE: VanviteLli pEr la RicErca" of the University of Campania ``Luigi Vanvitelli'' and was partially supported by the Italian National Group for Algebraic and Geometric Structures and their Applications (GNSAGA - INdAM).

\newpage

\appendix

\section{Polynomial representation of linear sets in $\mathrm{PG}(r-1,q^n)$ determined by $r$ points}

One of the most studied research topic on polynomials over finite fields regards the construction of polynomials or functions for which their value sets over a fixed finite field has some interesting properties, such as being small, large or with other preassigned conditions. Clearly, the problem becomes more and more difficult when considering $t$-tuple of polynomials.
In this appendix, we will show examples of $t$-tuples of linearized polynomials such that the size of the value set
\[ \mathrm{Im}\left( \frac{f_1(x)}x,\ldots,\frac{f_t(x)}x\right)=\left\{ \left(\frac{f_1(x)}x,\ldots, \frac{f_t(x)}x\right) \colon x \in \fqn^* \right\} \]
is determined, see \cite{Ball,BBBSSz}.
The key idea is to use the linear sets investigated in Section 6.

Let $U_1,\ldots,U_t$ be $\fq$-subspaces of $\fqn$ seen as an $n$-dimensional $\fq$-vector space such that
\[ U_1 \oplus \ldots \oplus U_t=\fqn. \]
Every vector $\mathbf{v} \in \fqn$ can be written uniquely as
\[ \mathbf{v}=\mathbf{v}_1+\ldots+\mathbf{v}_t, \]
where $\mathbf{v}_i \in U_i$ for every $i \in \{1,\ldots,t\}$. The vector $\mathbf{v}_i$ will be called the $i$-th \textbf{component} of $\mathbf{v}$ (with respect to $U_1,\ldots,U_t$) and the map $p_i\colon \fqn \rightarrow U_i$ such that $p_i(\mathbf{v})=\mathbf{v}_i$ is called the $i$-th \textbf{projection map}.

We also recall the definition of dual bases. Two ordered $\F_{q}$-bases $\mathcal{B}=(\xi_0,\ldots,\xi_{n-1})$ and $\mathcal{B}^*=(\xi_0^*,\ldots,\xi_{n-1}^*)$ of $\F_{q^n}$ are said to be \textbf{dual bases} if $\mathrm{Tr}_{q^n/q}(\xi_i \xi_j^*)= \delta_{ij}$, for $i,j\in\{0,\ldots,n-1\}$, where $\delta_{ij}$ is the Kronecker symbol. It is well known that for any $\F_q$-basis $\mathcal{B}=(\xi_0,\ldots,\xi_{n-1})$ there exists a unique dual basis $\mathcal{B}^*=(\xi_0^*,\ldots,\xi_{n-1}^*)$ of $\mathcal{B}$, see e.g.\ \cite[Definition 2.30]{lidl_finite_1997}.

\begin{lemma}\label{lem:projecmaps}
Let $U_1,\ldots,U_t$ be $\fq$-subspaces of $\fqn$ seen as an $n$-dimensional $\fq$-vector space such that
\[ U_1 \oplus \ldots \oplus U_t=\fqn. \]
Denote by $k_i$ the dimension of $U_i$ and let $\mathcal{B}_i=(\xi_{i,0},\ldots,\xi_{i,k_i-1})$ an ordered $\fq$-basis of $U_i$ for every $i \in \{1,\ldots,t\}$.
Let $\mathcal{B}^*=(\xi_{1,0}^*,\ldots,\xi_{1,k_1-1}^*,\ldots,\xi_{t,0}^*,\ldots,\xi_{t,k_t-1}^*)$ be the dual basis of $\mathcal{B}=(\xi_{1,0},\ldots,\xi_{1,k_1-1},\ldots,\xi_{t,0},\ldots,\xi_{t,k_t-1})$.
Then 
\[ p_i(x)=\sum_{j=0}^{k_i-1} \xi_{i,j}\mathrm{Tr}_{q^n/q}(\xi_{i,j}^*x)=\sum_{\ell=0}^{n-1}\left( \sum_{j=0}^{k_i-1} \xi_{i,j} \xi_{i,j}^{*q^\ell} \right)x^{q^\ell}. \]
\end{lemma}
\begin{proof}
Since $U_1 \oplus \ldots \oplus U_t=\fqn$, every $x \in \fqn$ can be uniquely written as
\[ x=x_1+\ldots+x_t, \]
with $x_i \in U_i$ for every $i \in \{1,\ldots,t\}$.
Let $i \in \{1,\ldots,t\}$ and let
\[ \overline{p}(x)=\sum_{j=0}^{k_i-1} \xi_{i,j}\mathrm{Tr}_{q^n/q}(\xi_{i,j}^*x). \]
Then it is easy to see that
\[ \overline{p}(\xi_{\ell,m})=\sum_{j=0}^{k_i-1} \xi_{i,j}\mathrm{Tr}_{q^n/q}(\xi_{i,j}^*\xi_{\ell,m})= 
\left\{
\begin{array}{ll}
\xi_{\ell,m} & \text{if}\,\,\ell=i,\\
0 & \text{otherwise},
\end{array}
\right.
\]
for any $(\ell,m)$ such that $\ell \in \{1,\ldots,t\}$ and $m \in \{0,\ldots,k_\ell-1\}$.
This means that $\overline{p}(x)$ coincides with the $i$-th projection map.
\end{proof}

For linear sets $L_U$ in $\PG(r-1,q^n)$ of rank $n$ containing $r$ independent points $P_1,\ldots,P_r$ such that
\[ w_{L_U}(P_1)+\ldots+w_{L_U}(P_r)=n, \]
we can determine $r$ linearized polynomials representing it.

\begin{theorem}
Let $L_W$ be an $\fq$-linear set in $\mathrm{PG}(r-1,q^n)$ of rank $n$ containing $r$ independent points $P_1=\langle \mathbf{v}_1\rangle_{\fqn},\ldots,P_r=\langle \mathbf{v}_r\rangle_{\fqn}$ such that
\[ w_{L_W}(P_1)+\ldots+w_{L_W}(P_r)=n. \] 
Then $L_W$ is $\mathrm{PGL}(r,q^n)$-equivalent to 
\[ L_p=\{ \langle (x, p_2(x),\ldots,p_r(x))\rangle_{\fqn} \colon x \in \fqn^* \}, \]
where $p_i(x)$ is the $i$-th projection map with respect to $r$ $\fq$-subspaces $U_1,\ldots,U_r$ of $\fqn$ such that
\[ U_1\oplus\ldots\oplus U_r=\fqn. \]
Moreover, if $k_i$ is the dimension of $U_i$ and $\mathcal{B}_i=(\xi_{i,0},\ldots,\xi_{i,k_i-1})$ is an ordered $\fq$-basis of $U_i$ for every $i \in \{1,\ldots,t\}$, then
\[ p_i(x)=\sum_{j=0}^{k_i-1} \xi_{i,j}\mathrm{Tr}_{q^n/q}(\xi_{i,j}^*x)=\sum_{\ell=0}^{n-1}\left( \sum_{j=0}^{k_i-1} \xi_{i,j} \xi_{i,j}^{*q^\ell} \right)x^{q^\ell}, \]
for every $i \in \{2,\ldots,r\}$, where
$\mathcal{B}^*=(\xi_{1,0}^*,\ldots,\xi_{1,k_1-1}^*,\ldots,\xi_{t,0}^*,\ldots,\xi_{t,k_t-1}^*)$ is the dual basis of $\mathcal{B}=(\xi_{1,0},\ldots,\xi_{1,k_1-1},\ldots,\xi_{t,0},\ldots,\xi_{t,k_t-1})$.
\end{theorem}
\begin{proof}
By Proposition \ref{prop:structure}, $L_W$ is $\mathrm{PGL}(r,q^n)$-equivalent to $L_U$ where 
$U=U_1\times \ldots \times U_r,$
for some $\fq$-subspaces $U_1,\ldots,U_r$ of $\fqn$ such that $U_i\cap \langle U_j \colon j \in\{1,\ldots,r\}\setminus\{i\} \rangle_{\fq}=\{\mathbf{0}\}$ for any $i\in \{1,\ldots,r\}$. 
Let $f$ be the $\fqn$-linear isomorphism of $\fqn^r$ determined by the matrix
\[ \left( \begin{array}{ccccc} 
1 & 1 & \cdots & 1 & 1 \\
0 & 1 & \cdots & 0 & 0 \\
\vdots & \vdots &  & \vdots & \vdots \\
0 & 0 & \cdots & 0 & 1 \\
\end{array}\right), \]
then
\[ f(U)=\{(x,p_2(x),\ldots,p_r(x)) \colon x \in \fqn \}, \]
and the assertion then follows by Lemma \ref{lem:projecmaps}.
\end{proof}

\end{document}